\newtheorem{theorem}{Theorem}[section]
\newtheorem{lemma}[theorem]{Lemma}
\newtheorem{proposition}[theorem]{Proposition}
\newtheorem{remark}[theorem]{Remark}
\newtheorem{example}[theorem]{Example}
\newtheorem{definition}[theorem]{Definition}
\newtheorem{rem}{Remark}
\newcommand{\thmref}[1]{Theorem~\ref{#1}}
\newcommand{\lemref}[1]{Lemma~\ref{#1}}
\newcommand{\NN}{{\mathbb{N}}}
\newenvironment{proof}{\medskip                    
\noindent{\scshape Proof:}}{\quad $\square$
\medskip}
\def\attr{\mathop{\rm attr}}
\def\kd{\hfil$\square$\linebreak}
\newcommand{\undx}{{\underline x}}
\newcommand{\ovex}{{\overline x}}
\newcommand{\BB}{{\mathbb{B}}}
\def\mbf#1{\mbox{\boldmath$#1$}}
\begin{document}

\title{Characterizing matrices with $\mbf{X}$-simple image eigenspace in max-min semiring}

\author[rvt]{J\'an Plavka\corref{cor}\fnref{fn1}}
\ead{jan.plavka@tuke.sk}

\author[rvt2]{Serge{\u\i} Sergeev\fnref{fn2}}
\ead{sergiej@gmail.com}

\address[rvt]{Department of Mathematics and Theoretical Informatics,  Technical  University,\\
B. N\v emcovej 32, 04200 Ko\v sice, Slovakia}

\address[rvt2]{University of Birmingham, School of Mathematics, Edgbaston B15 2TT, UK}

\cortext[cor]{Corresponding author. Email: jan.plavka@tuke.sk}
\fntext[fn1]{Supported by KEGA grant 032TUKE-4/2013, APVV grant 04-04-12.}

\fntext[fn2]{Supported by EPSRC grant EP/J00829X/1, RFBR grant
12-01-00886.}

\begin{abstract}

A matrix $A$ is said to have $\mbf{X}$-simple image eigenspace if
any eigenvector $x$ belonging to the interval $\mbf{X}=\{x\colon
\undx\leq x\leq\ovex\}$ is the unique solution of the system
$A\otimes y=x$ in $\mbf{X}$. The main result of this paper is a
combinatorial characterization of such matrices in the linear
algebra over max-min (fuzzy) semiring.

The characterized property is related to and motivated by the
general development of tropical linear algebra and interval
analysis, as well as the notions of simple image set and weak
robustness (or weak stability) that have been studied in max-min and
max-plus algebras.
\end{abstract}

\begin{keyword}
Max-min algebra, interval, weakly robust, weakly stable, eigenspace,
simple image set.

{\it AMS classification: 15A80, 15A18, 08A72}
\end{keyword}

\maketitle

\section{Introduction}

This paper is concerned with a problem of max-min linear algebra, which is one of
the sub-areas of tropical mathematics. In a wider algebraic context, tropical mathematics
(also known as idempotent mathematics) can be viewed as mathematical
theory developed over additively idempotent ($a\oplus a$) semirings.
Note that the operation of taking maximum of two numbers is the simplest and
the most useful example of an (additively) idempotent semiring.

Idempotent semirings
can be used in a  range of practical problems related to scheduling
and optimization, and offer many new problem statements to pure
mathematicians. There are
several monographs~\cite{Gol,gm1, HOW, KM:97} and collections
of papers~\cite{LM:05,LS:09} on tropical mathematics and its applications.
Let us also mention some connections between
idempotent algebra and fuzzy sets theory~\cite{DNG},~\cite{DNR}.

In the max-min algebra, sometimes also called the ``fuzzy algebra''~\cite{g1},
the arithmetical operations $a\oplus b:=\max(a,b)$ and $a\otimes b:=\min(a,b)$
are defined over a linearly ordered set. In the present paper, this linearly
ordered set is just the interval $[0,1]=\{\alpha\colon 0\leq\alpha\leq 1\}$.
As usual, the two arithmetical operations are naturally extended to matrices and
vectors.

The development of linear algebra over idempotent semirings was
historically motivated by multi-machine interaction
processes. In these processes we have $n$ machines which work in
stages, and in the algebraic model of their interactive work,
entry $x^{(k)}_i$ of a vector $x^{(k)}\in\BB^n$
where $i\in\{1,\ldots,n\}$ and $\BB$ is an idempotent semiring,
represents the state of machine $i$
after some stage $k$, and the entry $a_{ij}$ of a matrix $A\in\BB(n,n)$, where
$i,j\in\{1,\ldots,n\}$, encodes the influence of the work of machine $j$
in the previous stage on the work of machine $i$ in the current stage. For
simplicity, the process is assumed to be homogeneous, like in
the discrete time Markov chains, so that $A$ does not change from
stage to stage.
Summing up all the influence effects multiplied by the results of previous stages,
we have  $x_i^{(k+1)}=\bigoplus_j a_{ij}\otimes x_j^{(k)}$. In the case of
 $\oplus=\max$
this ``summation'' is often interpreted as waiting till
all the processes are finished and all the
necessary influence constraints are satisfied.

Thus the orbit $x,\, A\otimes x,\ldots A^k\otimes x$, where
$A^k=A\otimes\ldots\otimes A$, represents the evolution of such a process.
Regarding the orbits, one wishes to know the set of starting vectors from which
a given objective can be achieved. One of the most natural objectives in tropical
algebra, where the ultimate periodicity of the orbits often occurs, is to
arrive at an eigenvector. The set of starting vectors from
which one reaches an eigenvector of $A$ after a finite number of stages, is
called attraction set of $A$~\cite{But:10}, \cite{Ser-11}.
In general, attraction set contains the set of all eigenvectors, but it can be also
as big as the whole space. This leads us, in turn, to another question:
in which case is attraction set precisely the same as the set of all
eigenvectors?
Matrices with this property are called weakly robust or weakly stable~\cite{bss}.

In the special case of max-min algebra which
we are going to consider, it can be argued that an orbit can stabilize at a fixed
point ($A\otimes x=x$), but not at an
eigenvector with an eigenvalue different from unity.
Therefore, {\em by eigenvectors of $A$ we shall mean the fixed points of $A$
(satisfying $A\otimes x=x$)}.

In terms of the systems $A\otimes x=b$,
weak robustness (with eigenvectors understood
as fixed points) is equivalent to the following condition: every eigenvector
$y$ belongs to the {\em simple image set} of $A$, that is,  for every eigenvector $y$,
the system $A\otimes x=y$ has unique solution $x=y$.

In the present paper, we consider an interval version of this
condition. Namely, we describe matrices $A$ such that for any
eigenvector $y$ belonging to an interval
$\mbf{X}=[\undx,\ovex]:=\{x\in\BB^n;\, \undx\leq x\leq \ovex\}$ the
system $A\otimes x=y$ has a unique solution $x=y$ in $\mbf{X}$. This
is what we mean by saying that ``$A$ has $\mbf{X}$-simple image
eigenspace''. In Theorem~\ref{Th_WXRmaxminplus2}, which is the main
result of the paper, we show that under a certain natural condition,
$A$ has $\mbf{X}$-simple image eigenspace if and only if it
satisfies a nontrivial combinatorial criterion, which makes use of
threshold digraphs and to which we refer as ``$\mbf{X}$-conformism''
(see Definition~\ref{def:Xconf}).

The next section will be occupied by some definitions and notation
of the max-min algebra, leading to the discussion of weak $\mbf{X}$-
robustness and $\mbf{X}$-simple image eigenvectors.
Section~\ref{s:Xsimple} is devoted to the main result
(characterizing matrices with $\mbf{X}$-simple image eigenspace),
and its rather technical combinatorics. In Section~\ref{s:upward} we
prove a particular property of $\mbf{X}$-simple image eigenvectors,
to which we refer as ``upwardness''. This property states that if
$\alpha\otimes x$ is an $\mbf{X}$-simple image eigenvector, then so
is $\beta\otimes x$ for each $\beta\geq \alpha$.

Let us conclude with a brief overview of the
works on max-min algebra to which this
paper is related.
The concepts of robustness in max-min algebra were introduced and
studied in  \cite{PS}.  Following that work, some equivalent conditions
and efficient algorithms were
presented in \cite{mmp}, \cite{ms}, \cite{P1}.
In particular, see \cite{P1} for
some polynomial procedures checking  the weak robustness (weak stability)
in max-min algebra.

\section{Preliminaries}

\subsection{Max-min algebra and associated digraphs}

Let us denote  the set of all natural numbers by $\NN$.
Let $(\BB,\leq)$ be a bounded linearly ordered set  with the least element
in $\BB$   denoted by $O$ and  the greatest one by $I$.

A max-min semiring is a set $\BB$ equipped with two binary operations $\oplus=\max$ and $\otimes=\min$, called addition and multiplication,  such that
    $(\BB, \oplus)$ is a commutative monoid with identity element $O$,
            $(\BB, \otimes)$ is a monoid with identity element $I$,
             multiplication left and right distributes over addition and
            multiplication by $O$ annihilates $\BB$.


We will use use the notations $N$ and $ M$
 for the sets of  natural numbers not exceeding $n$ and $m$,
respectively, i.e.,
 $N = \{ 1,\, 2,\, \dots,\,n \}$ and $M = \{ 1,\, 2,\, \dots,\,m \}$.
 The set of $n\times m$ matrices over $\BB$ is denoted by $\BB(n,m)$,
and the set of $n\times 1$ vectors over $\BB$ is denoted by
$\BB(n)$.  If each entry of a matrix
$A\in  \BB(n,n)$ (a vector $x\in\BB(n)$) is equal to $O$ we shall
denote it as $A =O$ ($x=O$).

Let $x=(x_1,\dots,x_n)\in \BB(n)$ and $y=(y_1,\dots,y_n) \in \BB(n)$
be vectors. We write    $x\leq y\ (x<y)$   if $x_i\leq y_i\
(x_i<y_i)$ holds for each $i\in N$.\\

For a matrix $A\in \BB(n,n)$ the symbol $G(A)=(N,E)$ stands for a complete,
arc-weighted digraph associated with $A$, i.e., the node set of $G(A)$
is $N$, and the weight (capacity) of any arc $(i,j)$ is $a_{ij}\geq O$.
For given $h \in \BB$, the \textit{threshold digraph} $G(A,h)$
is the digraph with the node set $N$ and with the arc set
$E = \{(i, j);\ i, j \in N,\ a_{ij}\geq h\}$.
A path in the digraph $G(A)=(N,E)$ is a sequence of nodes
$p=(i_1,\,\ldots,\,i_{k+1})$ such that $(i_j,i_{j+1})\in E$ for
$j=1,\,\ldots,\,k$. The number $k$ is the length of the path $p$ and
is denoted by $l(p)$. If $i_1 = i_{k+1}$, then $p$ is called a cycle and it is
called an elementary cycle if moreover $i_j\neq i_m$
for $j,m=1,\dots,k$.

\subsection{Orbits, eigenvectors and weak robustness}
\label{s:generalities}

 For  $A\in \BB(n,n)$ and $x\in\BB(n)$, the
orbit $O(A,x)$ of $x=x^{(0)}$ generated by $A$ is the sequence
$$x^{(0)},x^{(1)},x^{(2)},\dots,x^{(n)},\dots,$$ where $x^{(r)}=A^r
\otimes x^{(0)}$ for each $r\in\mathbb{N}$.

The operations $\max,\min$  are idempotent, so no new numbers
are created in the process of
generating of an orbit. Therefore any orbit
contains only a finite number of different
vectors. It follows that any orbit starts repeating itself after some time,
in other words, it is ultimately periodic.
The same holds for the power sequence $(A^k; k \in\NN)$.

We are interested in the case when the ultimate period is $1$, or in other
words, when the orbit is ultimately stable. Note that in this case the
ultimate vector of the orbit necessarily satisfies $A\otimes x=x$.
This is the main reason why in this paper by eigenvectors we mean
fixed points. (Also observe that if $x$ is not a fixed point but a more general eigenvector satisfying
$A\otimes x=\lambda\otimes x$, then
$A\otimes x$ is already a fixed point due to the idempotency of multiplication.)

Formally we can define the attraction set $\attr(A)$   as follows
$$\attr(A) = \{x \in \BB(n);\ O(A, x) \cap V
(A)\neq \emptyset\}.$$
The present paper is closely related to the following kind of matrices.

\begin{definition}
Let  $A\in \BB(n,n)$ be a matrix. Then  $A$ is called
weakly robust (or weakly stable), if
$\attr(A)=V(A)$.
\end{definition}

Observe that in general $V(A)\subseteq\attr(A)\subseteq \BB^n$. The matrices for
which $\attr(A)=\BB^n$ are called (strongly) robust or (strongly) stable, as opposed
to weakly robust (weakly stable). The following fact, which holds in max-min algebra
and max-plus algebra alike, is one of the main motivations for our paper.

\begin{theorem}{\rm\cite{PS},\cite{bss}} \label{WS}
Let  $A\in \BB(n,n)$ be a matrix. Then $A$ is weakly robust if and only if
$(\forall x\in \BB(n))[A \otimes x \in V (A) \Rightarrow x\in V (A)].$
\end{theorem}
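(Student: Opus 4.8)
The plan is to use the definition of $\attr(A)$ directly and to observe that the stated one-step condition can be propagated backwards along an orbit. First I would record the trivial inclusion $V(A)\subseteq\attr(A)$: if $x\in V(A)$ then $A^r\otimes x=x$ for every $r$, so the orbit $O(A,x)$ is constant and meets $V(A)$. Consequently weak robustness $\attr(A)=V(A)$ is equivalent to the reverse inclusion $\attr(A)\subseteq V(A)$, and the two implications of the theorem should line up with this reformulation.

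For the forward direction (weak robustness $\Rightarrow$ condition), suppose $A\otimes x\in V(A)$. Then $x^{(1)}=A\otimes x$ lies in the orbit $O(A,x)$ and is a fixed point, so $O(A,x)\cap V(A)\neq\emptyset$ and hence $x\in\attr(A)$. Weak robustness gives $\attr(A)=V(A)$, whence $x\in V(A)$, as required.

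For the reverse direction (condition $\Rightarrow$ weak robustness), take any $x\in\attr(A)$. By definition the orbit $O(A,x)$ contains some vector $x^{(r)}=A^r\otimes x$ lying in $V(A)$; choose the least such index $r\geq 0$. If $r=0$ then $x=x^{(0)}\in V(A)$ and we are done. If $r\geq 1$, write $x^{(r)}=A\otimes x^{(r-1)}\in V(A)$ and apply the hypothesis to the vector $x^{(r-1)}$ to conclude $x^{(r-1)}\in V(A)$. Iterating this step (equivalently, by a downward induction on $r$) yields $x^{(r-1)},x^{(r-2)},\dots,x^{(0)}=x\in V(A)$, so $\attr(A)\subseteq V(A)$.

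The argument is short and I do not anticipate a serious obstacle; the only point requiring care is the backward propagation itself. The content of the theorem is precisely that the local implication $A\otimes y\in V(A)\Rightarrow y\in V(A)$ chains along the whole orbit, and the finiteness of $r$ furnished by the definition of $\attr(A)$ is exactly what makes the descent terminate. Note that idempotency (which guarantees ultimate periodicity of orbits) is what makes ``reaching a fixed point'' the correct formalization of weak robustness, but it is not needed in the proof once the definition of $\attr(A)$ is granted.
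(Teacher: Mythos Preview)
Your argument is correct. The paper does not supply its own proof of this theorem: it is quoted from the references \cite{PS} and \cite{bss} as a known result, so there is no in-paper proof to compare against. The direct argument you give---propagating the one-step implication $A\otimes y\in V(A)\Rightarrow y\in V(A)$ backwards along the orbit from the first index $r$ at which $A^r\otimes x\in V(A)$---is exactly the standard proof, and the forward direction via $x^{(1)}\in O(A,x)\cap V(A)$ is immediate from the definition of $\attr(A)$.
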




Let us conclude this section with recalling some information on 1) the greatest
eigenvector and 2) constant eigenvectors in max-min algebra.

Let  $A=(a_{ij})\in \BB(n,n)$ be a matrix in define the greatest eigenvector $x^{\oplus}(A)$
corresponding to a matrix $A$   as
$$x^{\oplus}(A)=\bigoplus\limits_{x\in
V(A)}\hspace{-0.1cm}x.$$

It has been
proved in \cite{y} for a more general structure (distributive lattice) that the greatest eigenvector $x^{\oplus}(A)$ of $A$ exists for each  matrix  $A\in \BB(n,n)$.
The greatest
 eigenvector $x^\oplus(A)$ can be computed by the following
iterative $O(n^2\log n)$ procedure (\cite{c3}).  Let us denote
$x_i^1(A)=\bigoplus\limits_{j\in N} a_{ij}$ for each $i\in N$ and $
x^{k+1}(A)=A\otimes x^{k}(A)\ \ \text{for all}\ \
k\in\{1,2,\dots\}$. Then $x^{k+1}(A)\leq  x^{k}(A)$ and
$x^\oplus(A)=x^n(A)$. Observe that $x_i^{\oplus}(A)\leq\bigoplus\limits_{j\in
N} a_{ij}$ for all $i$.

Next, denote
$$m_A=\bigoplus\limits_{i,j\in N} a_{ij},\ \ \ c(A)=\bigotimes\limits_{i\in N}\
\bigoplus\limits_{j\in N} a_{ij},\ \ \ c^*(A)=(c(A),\dots,c(A))^T\in
\BB(n).$$ It can be checked that $A\otimes c^*(A)=c^*(A)$, since
every row of $A$ contains an entry that is not smaller than
$c^*(A)$. In fact, this condition is necessary and sufficient for a
constant eigenvector to be an eigenvector of $A$. Therefore any
constant vector that is smaller than $c^*(A)$ is also an
eigenvector, and $c^*(A)$ is the largest constant eigenvector of
$A$. However, as $x^{\oplus}(A)$ is the greatest eigenvector of $A$,
we have $c^*(A)\leq x^{\oplus}(A)$.

\subsection{Weak $\mbf{X}$-robustness and $\mbf{X}$-simplicity}

In this section we consider an interval extension of weak robustness and
its connection to $\mbf{X}$-simplicity, the main notion studied in this paper.
We remind that throughout the paper,
$$
\mbf{X}=[\undx,\ovex]=\{x\in\BB^n\colon \undx\leq x\leq\ovex,\}, \quad\text{where
$\undx,\ovex\in\BB^n$}.
$$

Consider the following interval extension of weak $\mbf{X}$-robustness.

\begin{definition}
$A\in\BB^n$ is called
weakly $\mbf{X}$-robust  if $\attr(A)\cap \mbf{X}\subseteq V(A)$.
\end{definition}

The notion of $\mbf{X}$-simplicity, which we will introduce next, is
related to the concept of simple image set~\cite{ButSIS}: by
definition, this is the set of vectors $b$ such that the system
$A\otimes x=b$ has a unique solution, which is usually denoted by
$|S(A,b)|=1$ ($S(A,b)$ standing for the solution set of $A\otimes x
=b$).
 If the only solution of the system $A\otimes x=b$ is $x=b$, then $b$ is
called a {\em simple
image eigenvector.}

If $\mbf{X}=\BB$ then the notion of weak robustness
can be described in terms of simple image eigenvectors:

\begin{proposition} \label{Th_WXRminplus}
Let $A\in\BB(n,n)$. The following are equivalent:
\begin{enumerate}
\item $A$ is weakly robust;
\item $(\forall x\in   V(A))[|S(A,x)|=1]$;
\item Each $x\in V(A)$ is a simple image eigenvector.
\end{enumerate}
\end{proposition}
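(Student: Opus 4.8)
The plan is to reduce everything to the weak-robustness criterion of \thmref{WS}, after first observing that (2) and (3) are merely two phrasings of the same condition. Indeed, if $x\in V(A)$ then $A\otimes x=x$, so $x$ itself is a solution of the system $A\otimes y=x$; hence $x\in S(A,x)$ always holds. Consequently $|S(A,x)|=1$ is equivalent to $S(A,x)=\{x\}$, which is exactly the statement that $x$ is a simple image eigenvector. This disposes of (2)$\Leftrightarrow$(3) by unwinding definitions, and it remains to connect either of them with weak robustness.

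For (1)$\Rightarrow$(3) I would argue as follows. Assume $A$ is weakly robust and fix an eigenvector $x\in V(A)$. Let $y$ be any solution of $A\otimes y=x$. Then $A\otimes y=x\in V(A)$, so the forward implication of \thmref{WS} yields $y\in V(A)$, i.e. $A\otimes y=y$. Combining this with $A\otimes y=x$ gives $y=x$. Thus $x$ is the unique solution of $A\otimes y=x$, so $x$ is a simple image eigenvector, which is (3).

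For (3)$\Rightarrow$(1) I would verify the criterion of \thmref{WS} directly. Take any $y\in\BB(n)$ with $A\otimes y\in V(A)$ and set $x:=A\otimes y$. By (3) the eigenvector $x$ is a simple image eigenvector, so the only solution of $A\otimes z=x$ is $z=x$; since $y$ is such a solution we obtain $y=x\in V(A)$. Hence $A\otimes y\in V(A)$ forces $y\in V(A)$ for every $y$, which is precisely the condition characterizing weak robustness in \thmref{WS}, giving (1). I do not expect a genuine obstacle: the whole argument is a short unwinding of definitions together with one application of \thmref{WS} in each direction. The only point requiring care is the elementary observation that $x\in S(A,x)$ for every eigenvector $x$, since this is what makes the ``unique solution'' formulation (2) coincide with the ``solution equals $x$'' formulation (3), and it is also what lets us sharpen $y\in V(A)$ to $y=x$ in both implications above.
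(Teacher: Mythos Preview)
Your proof is correct and follows essentially the same approach as the paper: both arguments reduce the equivalence to the criterion of \thmref{WS}, with the observation that $x\in S(A,x)$ for every eigenvector $x$ making (2) and (3) tautologically equivalent. The paper proves (1)$\Rightarrow$(2) by contraposition and dismisses the converse as trivial, whereas you spell out both directions directly; the content is the same.
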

{\it Proof.} We will only prove the equivalence between the
first two claims (the other equivalence being evident). Suppose that there is $x\in V(A)$ such that $|S(A,x)|>1$ (notice that $|S(A,x)|\geq 1$ for each $x$ because of $x\in V(A)$). Then there is at least one solution $y$ of the system $A\otimes y=x$ and $y\neq x$. Using \thmref{WS} we get
$A\otimes(A\otimes y)=A\otimes x=x$ and $A\otimes y=x\neq y$, this is a contradiction.\\
The converse implication trivially follows.\kd

This motivates us to consider an interval version of
simple image eigenvectors.

\begin{definition} Let $A = (a_{ij} )\in \BB(n,n)$.
\begin{enumerate}
\item
An eigenvector $x\in V(A)\cap\mbf{X}$ is called an
$\mbf{X}$-simple image eigenvector if $x$ is the unique solution of the
equation $A\otimes y=x$ in interval $\mbf{X}$.
\item
Matrix $A$ is said to have $\mbf{X}$-simple image eigenspace if any
$x\in   V(A)\cap\mbf{X}$ is an $\mbf{X}$-simple image eigenvector.
\end{enumerate}
\end{definition}

\if{
Let us also provide a more direct characterization of $A$ having
$X$-simple image eigenspace.
\begin{lemma}
$A\in\BB(n,n)$ has simple image eigenspace if and only if
$(\forall x\in \mbf{X})[A\otimes x \in
V (A) \Rightarrow x\in V (A)].$
\end{lemma}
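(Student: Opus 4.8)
The plan is to mirror the proof of Proposition~\ref{Th_WXRminplus}, exploiting that every eigenvector is a fixed point of $A$ and hence is automatically one solution of the system associated with it; throughout I would use that $\otimes$ is order preserving, so $u\leq v$ implies $A\otimes u\leq A\otimes v$.

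For the implication from the one-step condition to $\mbf{X}$-simplicity I would argue directly. Fix $x\in V(A)\cap\mbf{X}$ and let $y\in\mbf{X}$ be any solution of $A\otimes y=x$. Since $x\in V(A)$ means $A\otimes x=x$, we have $A\otimes y=x\in V(A)$; applying the hypothesis to the vector $y\in\mbf{X}$ gives $y\in V(A)$, that is $A\otimes y=y$, and combined with $A\otimes y=x$ this forces $y=x$. Hence $x$ is the unique solution in $\mbf{X}$, i.e. an $\mbf{X}$-simple image eigenvector, and as $x$ was arbitrary $A$ has $\mbf{X}$-simple image eigenspace. This direction needs no extra assumption.

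For the converse I would start from $x\in\mbf{X}$ with $z:=A\otimes x\in V(A)$ and try to conclude $z=x$ (equivalently $x\in V(A)$). As $z$ is a fixed point, $A\otimes z=z$, so both $z$ and $x$ solve $A\otimes y=z$. Were $z\in\mbf{X}$, we would have $z\in V(A)\cap\mbf{X}$, and $\mbf{X}$-simplicity would make $z$ the unique solution of $A\otimes y=z$ in $\mbf{X}$, forcing $x=z$ and finishing the argument.

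The step I expect to be the real obstacle is precisely the membership $z\in\mbf{X}$, which can genuinely fail: for $n=1$, $a_{11}=0.5$ and $\mbf{X}=[0.7,0.9]$ one has $V(A)\cap\mbf{X}=\emptyset$, so $A$ vacuously has $\mbf{X}$-simple image eigenspace, yet $x=0.7\in\mbf{X}$ satisfies $A\otimes x=0.5\in V(A)$ while $x\notin V(A)$; thus the equivalence is not valid for an arbitrary interval. The natural fix is to assume $\mbf{X}$ is $A$-invariant: by monotonicity $A\otimes\undx\leq z\leq A\otimes\ovex$, so it suffices to require $A\otimes\undx\geq\undx$ and $A\otimes\ovex\leq\ovex$, equivalently $A\otimes\mbf{X}\subseteq\mbf{X}$. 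Under this hypothesis $z\in\mbf{X}$ and the converse closes as above; accordingly I would either adjoin this invariance condition to the statement or retain only the unconditional forward implication.
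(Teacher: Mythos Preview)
Your analysis is correct and, in fact, more complete than what the paper itself offers for this lemma. In the paper's source this lemma sits inside an \verb|\if{...}\fi| block: it is commented out, and the accompanying proof is only a fragment --- it begins the ``only if'' direction (``Let $A$ have $\mbf{X}$-simple image eigenspace, and let $x\in\mbf{X}$ and $A\otimes x\in V(A)$\ldots Then'') and simply stops. The authors evidently abandoned this formulation and replaced it with Theorem~\ref{WXR}, which separates the two directions and explicitly adds the invariance hypothesis $A\otimes\mbf{X}\subseteq\mbf{X}$ (via Proposition~\ref{p:inv}) to the converse.

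Your forward implication (one-step condition $\Rightarrow$ $\mbf{X}$-simple image eigenspace) is clean and valid unconditionally, exactly as you wrote. For the converse you have correctly diagnosed the obstruction: one needs $z=A\otimes x\in\mbf{X}$ in order to invoke $\mbf{X}$-simplicity, and your $n=1$ counterexample ($a_{11}=0.5$, $\mbf{X}=[0.7,0.9]$) shows the equivalence genuinely fails without invariance. The fix you propose --- assume $A\otimes\undx\geq\undx$ and $A\otimes\ovex\leq\ovex$ --- is precisely the one the paper adopts in Theorem~\ref{WXR}(ii). So your proposal not only recovers the paper's intended argument but also supplies the counterexample that justifies why the unadorned lemma had to be withdrawn.
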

\begin{proof}
The ``only if'' part.
Let $A\in\BB(n,n)$ have $\mbf{X}$-simple image eigenspace, and
let $x\in\mbf{X}$ and $A\otimes x\in V(A)$, thus $A\otimes x=y$ where
$y\in V(A)$
Then
\end{proof}
}\fi

\begin{theorem} \label{WXR}
Let   $A\in \BB(n,n)$ be a matrix and $\mbf{X}=[\undx, \ovex] \subseteq \BB(n)$
be an interval vector.
\begin{enumerate}
\item
If $A$ is weakly $\mbf{X}$-robust then
$A$ has $\mbf{X}$-simple image eigenspace.

\item
If $A$ has $\mbf{X}$-simple image eigenspace and if
$\mbf{X}$ is invariant under $A$ then $A$ is
weakly $\mbf{X}$-robust.

\end{enumerate}
\end{theorem}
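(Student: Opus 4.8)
The plan is to prove the two implications separately. Part (i) is a short direct argument; part (ii) is where the invariance hypothesis will be used in an essential way, and I would keep track of exactly where it is consumed.

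For part (i), I would start from an arbitrary eigenvector $x\in V(A)\cap\mbf{X}$ and any $y\in\mbf{X}$ satisfying $A\otimes y=x$, and show $y=x$. The key observation is that such a preimage $y$ automatically lies in the attraction set: since $A\otimes x=x$, we have $A^2\otimes y=A\otimes x=x\in V(A)$, so $O(A,y)\cap V(A)\neq\emptyset$ and therefore $y\in\attr(A)$. As also $y\in\mbf{X}$, weak $\mbf{X}$-robustness gives $y\in\attr(A)\cap\mbf{X}\subseteq V(A)$, i.e. $A\otimes y=y$; together with $A\otimes y=x$ this forces $y=x$. Hence $x$ is the unique solution in $\mbf{X}$ of $A\otimes(\cdot)=x$, so every $x\in V(A)\cap\mbf{X}$ is an $\mbf{X}$-simple image eigenvector, which is the assertion.

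For part (ii), I would fix $y\in\attr(A)\cap\mbf{X}$ and let $k\geq 0$ be minimal with $A^k\otimes y\in V(A)$; such a $k$ exists and is finite by the definition of $\attr(A)$, and the goal is to show $k=0$, i.e. $y\in V(A)$. Here the invariance hypothesis $A\otimes\mbf{X}\subseteq\mbf{X}$ guarantees that the whole orbit stays inside $\mbf{X}$: from $y\in\mbf{X}$ one gets $A^j\otimes y\in\mbf{X}$ for every $j$ by induction. Now, assuming $k\geq 1$, set $x=A^k\otimes y\in V(A)\cap\mbf{X}$ and $z=A^{k-1}\otimes y\in\mbf{X}$. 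Then $A\otimes z=A^k\otimes y=x$, so $z$ is a solution in $\mbf{X}$ of $A\otimes(\cdot)=x$. Since $A$ has $\mbf{X}$-simple image eigenspace and $x\in V(A)\cap\mbf{X}$, uniqueness forces $z=x$, whence $A^{k-1}\otimes y=x\in V(A)$, contradicting the minimality of $k$. Therefore $k=0$ and $y\in V(A)$, establishing $\attr(A)\cap\mbf{X}\subseteq V(A)$.

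I do not expect any obstacle in part (i), since it rests only on the remark that a preimage of an eigenvector already belongs to the attraction set. The point to get right is in part (ii): the $\mbf{X}$-simple image eigenspace property constrains only those solutions that lie in $\mbf{X}$, so descending one step along the orbit from $A^k\otimes y$ to $A^{k-1}\otimes y$ is legitimate only because $\mbf{X}$-invariance keeps the intermediate point $z=A^{k-1}\otimes y$ inside $\mbf{X}$. This is precisely where the invariance hypothesis is needed; without it the descent step could fail.
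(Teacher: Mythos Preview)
Your proof is correct and follows essentially the same approach as the paper: in part (i) you show that any preimage $y\in\mbf{X}$ of an eigenvector lies in $\attr(A)\cap\mbf{X}$ and hence in $V(A)$, forcing $y=x$; in part (ii) you use invariance to keep the orbit inside $\mbf{X}$ and descend one step at a time via the $\mbf{X}$-simple image property, exactly as the paper does. Your write-up is slightly more explicit (framing part (ii) as a minimality argument), but there is no substantive difference.
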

{\it Proof.} (i) Suppose  that $A$ is weakly $\mbf{X}$-robust and
$x\in V(A)\cap\mbf{X}$. If the system $A\otimes y=x$ has a solutions $y\neq x$ in
$\mbf{X}$, then $y$ is not an eigenvector but belongs to $\attr(A)\cap\mbf{X}$,
which contradicts the weak $\mbf{X}$-robustness.

(ii) Assume that $A$ has $X$-simple eigenspace and
$x$ is an arbitrary element of $\attr(A)\cap\mbf{X}$.
As $\mbf{X}$ is invariant under $A$, we have that $A^k\otimes x\in\mbf{X}$
for all $k$. Then $A^k\otimes x\in
V(A)$ for some $k$ implies
$A^{k-1}\otimes x=A^k\otimes x\in V(A)$,...,
$x\in V(A)$.
\kd

As $A$ is order-preserving, the invariance of $\mbf{X}$ under $A$ admits the following
simple characterization:

\begin{proposition}
\label{p:inv}
$\mbf{X}$ is invariant under $A$ if and only if $A\otimes \undx\geq \undx$ and
$A\otimes\ovex\leq\ovex$.
\end{proposition}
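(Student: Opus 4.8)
The plan is to prove both directions of the equivalence by unwinding the definition of invariance and exploiting that $A$ acts as an order-preserving map with respect to the componentwise order $\leq$. Recall that $\mbf{X}=[\undx,\ovex]$ is invariant under $A$ precisely when $A\otimes x\in\mbf{X}$ for every $x\in\mbf{X}$, i.e.\ $\undx\leq A\otimes x\leq\ovex$ for all $x$ satisfying $\undx\leq x\leq\ovex$.

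For the ``only if'' direction, I would simply specialize. Since $\undx\in\mbf{X}$ and $\ovex\in\mbf{X}$, invariance forces $A\otimes\undx\in\mbf{X}$ and $A\otimes\ovex\in\mbf{X}$; in particular $A\otimes\undx\geq\undx$ (the lower bound of membership in $\mbf{X}$) and $A\otimes\ovex\leq\ovex$ (the upper bound). This needs no more than reading off the two relevant inequalities from the definition.

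For the ``if'' direction, I would use order-preservation, which is the key structural property: because $\oplus=\max$ and $\otimes=\min$ are monotone in each argument, $x\leq y$ implies $A\otimes x\leq A\otimes y$. Now take any $x\in\mbf{X}$, so $\undx\leq x\leq\ovex$. Applying $A$ and using monotonicity gives $A\otimes\undx\leq A\otimes x\leq A\otimes\ovex$. Combining with the hypotheses $A\otimes\undx\geq\undx$ and $A\otimes\ovex\leq\ovex$ yields $\undx\leq A\otimes\undx\leq A\otimes x$ and $A\otimes x\leq A\otimes\ovex\leq\ovex$, hence $\undx\leq A\otimes x\leq\ovex$, i.e.\ $A\otimes x\in\mbf{X}$. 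Since $x\in\mbf{X}$ was arbitrary, $\mbf{X}$ is invariant.

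There is no real obstacle here; the only point worth stating explicitly is the monotonicity (order-preservation) of the map $x\mapsto A\otimes x$, which the paper already invokes in the sentence preceding the proposition. The proof is essentially a sandwich argument: invariance on the two endpoints $\undx,\ovex$ propagates to the whole interval precisely because $A$ respects the order.
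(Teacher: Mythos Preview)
Your proof is correct and follows exactly the approach the paper intends: the paper does not give an explicit proof but simply prefaces the proposition with ``As $A$ is order-preserving,'' which is precisely the monotonicity/sandwich argument you spell out. You have filled in the routine details the authors left to the reader.
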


Thus the $\mbf{X}$-simplicity is a necessary condition for
weak $\mbf{X}$-robustness. It is also sufficient if
the interval $\mbf{X}$ is invariant under $A$, i.e.,
$\undx\leq A\otimes\undx$ and  $A\otimes\ovex\leq\ovex$.

\section{ $\mbf{X}$-simple image eigenspace and $\mbf{X}$-conformism}

\label{s:Xsimple}

The purpose of this section is to define the condition for matrix $A$ which will
ensure that each eigenvector $x\in V(A)\cap\mbf{X}$ is an $\mbf{X}$-simple image
eigenvector.

\begin{definition}
\label{def:levelperm} A matrix $A= (a_{ij} )\in\BB(n,n)$ is called a generalized level
$\alpha$-permutation matrix (abrr. level
$\alpha$-permutation)  if all  entries greater than or equal to $\alpha$ of $A$ lie on
disjoint elementary cycles covering all the nodes. In other words,
the threshold digraph $G(A,\alpha)$
 is the set of disjoint elementary cycle
containing all nodes.
\end{definition}

Let us also define the following quantity:

\begin{equation}
\label{e:gax}
\begin{split}
\gamma(A,\ovex)&=\min(c(A),\min_{i\in N} \ovex_i),\quad\text{where
$c(A)=\min_{i\in N}\max_{j\in N} a_{ij}$}\\
\gamma^*(A,\ovex)&=(\gamma(A,\ovex),\ldots,\gamma(A,\ovex)).
\end{split}
\end{equation}

Since $\gamma^*(A,\ovex)$ is a constant vector such that each row of
$A$ contains an entry not smaller than $\gamma(A,\ovex)$, we obtain
$A\otimes \gamma^*(A,\ovex) =\gamma^*(A,\ovex)$
(i.e.,$\gamma^*(A,\ovex)\in V(A)$).

\begin{lemma}\label{l:cAperm} Let $A = (a_{ij} )\in \BB(n,n)$ be a  matrix,
$\mbf{X}=[\undx, \ovex] \in \BB(n)$ be an interval vector. Assume
that $\undx<c^*(A)$ and $\max_{i\in N} \undx_i<\min_{i\in N}
\ovex_i$. Then, if $A$ has $\mbf{X}$-simple image eigenspace then $A$
is level $\gamma(A,\ovex))$-permutation.
\end{lemma}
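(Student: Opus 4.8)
The plan is to argue by contraposition: assuming $A$ is not a level $\gamma(A,\ovex)$-permutation, I will produce an eigenvector in $V(A)\cap\mbf{X}$ that fails to be an $\mbf{X}$-simple image eigenvector. Write $\gamma:=\gamma(A,\ovex)$ and take $x:=\gamma^*(A,\ovex)$, which is already known to lie in $V(A)$. First I would check that $x\in\mbf{X}$. Since $\gamma\le\min_i\ovex_i\le\ovex_j$ for every $j$, we have $x\le\ovex$; and combining the two hypotheses $\undx<c^*(A)$ and $\max_i\undx_i<\min_i\ovex_i$ gives $\max_i\undx_i<\min(c(A),\min_i\ovex_i)=\gamma$, whence $\undx<x$ strictly. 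Thus $x\in V(A)\cap\mbf{X}$ and, crucially, every coordinate of $\undx$ lies strictly below $\gamma$, leaving room to lower a coordinate of $x$ while staying in $\mbf{X}$.

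The combinatorial heart of the argument concerns the threshold digraph $G(A,\gamma)$. Encode it by the Boolean matrix $B$ with $b_{ij}=1$ iff $a_{ij}\ge\gamma$; being a level $\gamma$-permutation is exactly the statement that $B$ is a permutation matrix (equivalently, every node of $G(A,\gamma)$ has in-degree and out-degree $1$). Every row of $B$ has at least one $1$, because $\max_j a_{ij}\ge c(A)\ge\gamma$ for each $i$. I claim that if $B$ is \emph{not} a permutation matrix, then some column $j_0$ has the property that every row of $B$ has a $1$ outside column $j_0$. Indeed, were this to fail for every column $j$, then for each $j$ there would exist a row whose unique $1$ sits in column $j$; these rows are pairwise distinct, so they exhaust all $n$ rows and exhibit $B$ as a permutation matrix, a contradiction. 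This claim is the step I expect to be the main obstacle to phrase cleanly, though the counting itself is short.

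With such a column $j_0$ in hand, I would define $y$ by $y_{j_0}:=\undx_{j_0}$ and $y_j:=\gamma$ for $j\ne j_0$. Then $y\in\mbf{X}$ (since $\undx_{j_0}<\gamma\le\ovex_{j_0}$ and $\undx_j<\gamma\le\ovex_j$), and $y\ne x$ because $y_{j_0}<\gamma=x_{j_0}$. It remains to verify $A\otimes y=x$. As $y\le x$ and $A$ is order-preserving, $A\otimes y\le A\otimes x=x$, giving $(A\otimes y)_i\le\gamma$ for all $i$. For the reverse inequality, the choice of $j_0$ guarantees that each row $i$ has some $j\ne j_0$ with $a_{ij}\ge\gamma$; for that $j$ we have $y_j=\gamma$, hence $\min(a_{ij},y_j)=\gamma$ and $(A\otimes y)_i\ge\gamma$. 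Therefore $A\otimes y=x$.

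This produces a second solution $y\ne x$ of $A\otimes y=x$ inside $\mbf{X}$, so $x=\gamma^*(A,\ovex)$ is an element of $V(A)\cap\mbf{X}$ that is not an $\mbf{X}$-simple image eigenvector, contradicting the assumption that $A$ has $\mbf{X}$-simple image eigenspace. Hence $B$ must be a permutation matrix, i.e.\ $A$ is a level $\gamma(A,\ovex)$-permutation, which completes the contrapositive.
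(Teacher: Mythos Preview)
Your proof is correct and follows essentially the same approach as the paper: both use the constant eigenvector $\gamma^*(A,\ovex)$ and show that if $A$ is not a level $\gamma$-permutation then one coordinate can be lowered to $\undx_{j_0}$ while keeping $A\otimes y=\gamma^*(A,\ovex)$. The paper splits into two cases (either some column of the threshold digraph has in-degree~$0$, or all columns have in-degree~$\ge 1$ but some row has out-degree~$\ge 2$), whereas your single combinatorial claim---that a non-permutation $0$-$1$ matrix with a $1$ in every row must have a column whose removal leaves a $1$ in every row---handles both cases at once and is proved more carefully than the paper's bare assertion in its Case~2.
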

\begin{proof} 
For a contrary suppose that, under the given conditions, $A$ is not
level $\gamma(A,\ovex))$-permutation. We shall look for two solutions
of $A\otimes y=\gamma^*(A,\ovex)$. One solution is $\gamma^*(A,\ovex)\in V(A)\cap\mbf{X}$.
Since $A$ is not level $\gamma(A,\ovex)$-permutation and each
row of $A$ contains at least one element $a_{ij}\geq \gamma(A,\ovex)$ we shall consider two cases.\\
Case 1. $(\exists k\in N)[\max\limits_{s\in N}
a_{sk}<\gamma(A,\ovex)]$. The second solution $y'\in \mbf{X}$ is
$$y'_i=\begin{cases}
\undx_i, & \text{  if  }i= k\\
\gamma(A,\ovex), & \text{ otherwise},
\end{cases}
$$
since we have $a_{sk}<\gamma(A,\ovex)$ for all $s$, implying that
the terms $a_{sk}\otimes y'_k$ are unimportant and $y'_k$ can be set
to any admissible value.

Case 2. $(\forall k\in N)[\max\limits_{s\in N}a_{sk}\geq
\gamma(A,\ovex)]$ and $(\exists i,j,k\in N)[a_{ij}\geq
\gamma(A,\ovex)$ and $a_{ik}\geq \gamma(A,\ovex)].$ Then there is
$v\in N$ such that $(\forall i\in N)[\max\limits_{j\in
N\setminus\{v\}}a_{ij}\geq \gamma(A,\ovex)]$ and the second solution
$y'\in \mbf{X}$ can be defined as follows
$$y'_i=\begin{cases}
\undx_i, & \text{  if  }i= v\\
\gamma(A,\ovex), & \text{ otherwise},\\
\end{cases}
$$
since attainment of the maximum value in every row of $A\otimes y$
by other terms than $a_{sv}\otimes y_v$ makes these terms redundant,
so that $y_v$ can be replaced by any admissible value $y'_v<y_v$.

In both cases we obtained a contradiction with $A$ having $\mbf{X}$-simple
image eigenspace.
\end{proof}

\begin{definition}
\label{def:Xconf} Let $\mbf{X}=[\undx, \ovex] \subseteq \BB(n)$ be
an interval vector   such that $\undx<c^*(A)$ and $\max_{i\in
N}\undx_i<\min_{i\in N} \ovex_i$ and $A = (a_{ij} )\in \BB(n,n)$ be
a level $\gamma(A,\ovex)$-permutation matrix. Let $(i_1,\dots,i_n)$
be a permutation of $N$ such that $a_{i_j i_{j+1}}\geq
\gamma(A,\ovex)$ and $(i_1,\dots,i_n)=
(i_1^1,\dots,i_{s_1}^1)\dots(i_1^k,\dots,i_{s_k}^k)$
$(c_u=(i_1^u,\dots,i_{s_u}^u) $
is an elementary cycle in digraph $G(A,\gamma(A,\ovex))$, $u=1,\dots,k)$. Then\\

\hspace{-.81cm} vectors $e_{\undx}=(e_1,\dots,e_n)^T$ and $f_{\ovex}=(f_1,\dots,f_n)^T$
 are called $\undx$-vector of $A$ and $\ovex$-vector of $A$ if
$$e_i=\max\limits_{v\in c_u} \undx_v
\text{  and  }f_i=\min\limits_{v\in c_u} \ovex_v\otimes x^\oplus_v(A),$$
respectively,   for  $i\in c_u$, $u\in\{1,\dots,k\}$\\

\hspace{-.61cm}and \\

 \hspace{-.61cm}matrix $A $  is called   $\mbf{X}$-conforming  if
\begin{enumerate}
\item
$\undx_{i_{j+1}}< e_{i_{j+1}} \Rightarrow a_{i_jk}< e_{i_j} \text{  for  }  k\neq i_{j+1},\ k\in N$
\item
$\undx_{i_{j+1}}= e_{i_{j+1}} \Rightarrow a_{i_jk}\leq e_{i_j} \text{  for  }  k\neq i_{j+1},\ k\in N$
\item
$a_{i_ji_{j+1}}=\min\limits_{(k,l)\in c_u} a_{kl}=x^\oplus_{i_{j+1}}(A)=f_{i_{j+1}}\Rightarrow \ovex_{i_{j+1}}\leq x^\oplus_{i_{j+1}}(A)$.
\end{enumerate}
\end{definition}

\begin{remark}
\label{r:cycleq} {\rm Notice that $e_{i_{j}}=e_{i_{j+1}}$ and
$f_{i_{j}}=f_{i_{j+1}}$ by definition of $e_\undx$ and $f_\ovex$
(nodes $i_{j},i_{j+1}$ are lying in the same cycle $c_u$). Notation
$(k,l)\in c_u$ means that the edge $(k,l)$ is lying in $c_u$.}
\end{remark}

\begin{remark}
\label{r:f} {\rm Observe that $f_i\leq\bigoplus_{j\in N} a_{ij}$ for
all $i$, since the same holds for $x^{\oplus}(A)$ (the end of
Section~\ref{s:generalities}.}
\end{remark}

\begin{example}
Let us consider  $\BB=[0,10]$, $\lambda=10$
and
\begin{displaymath}A=\left(\begin{array}{cccc}
4&4&4&\bf{5}\\
2&2&\bf{7}&2\\
3&\bf{8}&3&3\\
\bf{7}&3&3&3
\end{array}\right),\ \
\undx=
\left(\begin{array}{c}
2\\
3\\
2\\
4
\end{array}\right),\ \
\ovex=
\left(\begin{array}{c}
7\\
9\\
6\\
5
\end{array}\right).
\end{displaymath}
Matrix $A$ is level $5$-permutation with $c_1=(i_1,i_2)=(1,4)$, $c_2=(i_3,i_4)=(2,3)$ and
$x^\oplus(A)=(5,7,7,5)^T$. Vectors  $e_{\undx}$ and $f_\ovex$ have the following coordinates
$$e_1=e_4=\max(\undx_1,\undx_4)=4,\;
e_2=e_3=\max(\undx_2,\undx_3)=3,$$
$$f_1=f_4=\min(\ovex_1\otimes x^\oplus_1,\ovex_4\otimes x^\oplus_4)=5,\;
f_2=f_3=\min(\ovex_2\otimes x^\oplus_2,\ovex_3\otimes x^\oplus_3)=6,$$
thus $e_\undx=(4,3,3,4)^T$ and $f_\ovex=(5,6,6,5)^T$.

Now,  we shall argue that $A$ is   $\mbf{X}$-conforming,

$$i_1=1,i_2=4;\ \  \undx_1<e_1\Rightarrow a_{4j}< e_4\ (\forall j\neq 1),$$
$$i_2=4, i_1=1;\ \ \undx_4=e_4 \Rightarrow a_{1j}\leq e_1\ (\forall j\neq 4),$$
$$i_3=2,i_4=3;\ \  \undx_2=e_2\Rightarrow a_{3j}\leq e_3\ (\forall j\neq 2),$$
$$i_4=3,i_3=2;\ \  \undx_3<e_3 \Rightarrow a_{2j}< e_2\  (\forall j\neq 3)$$
and
$$a_{14}=5=\min\limits_{(k,l)\in c_1=(14)} a_{kl}=x^\oplus_{4}(A)\Rightarrow \ovex_4=5\leq x^\oplus_4(A)=5.$$
Hence matrix $A$ is    $\mbf{X}$-conforming.
\end{example}

\begin{lemma}\label{l:eigspace} Let $A = (a_{ij} )\in \BB(n,n)$ be a  matrix, and let $\mbf{X}=[\undx, \ovex]
\in \BB(n)$ be an interval vector. Assume that $\undx<c^*(A)$,
$\max_{i\in N}\undx_i<\min_{i\in N} \ovex_i$ and that $A$ is
$\mbf{X}$-conforming. Then
\begin{enumerate}
\item $ \undx\leq e_{\undx}< \gamma^*(A,\ovex)$, $\gamma^*(A,\ovex)\leq f_{\ovex}\leq\ovex$ and   $e_{\undx},f_\ovex\in V(A)\cap\mbf{X}$,
\item $(\forall x\in\mbf{X}\cap V(A))[e_{\undx}\leq x\leq f_\ovex]$,
\item $V(A)\cap\mbf{X}=\{(x_1,\dots,x_n)^T;\
x_i=\alpha_u\in [e_i, f_i]\text{ for }i\in c_u,\,1\leq u\leq k\}.$

\end{enumerate}
\end{lemma}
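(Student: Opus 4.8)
The plan is to prove the three assertions simultaneously, with the set equality of assertion (iii) as the core and assertions (i)--(ii) falling out of it. Everything rests on two structural features of a level $\gamma(A,\ovex)$-permutation matrix. First, every node $i$ has a \emph{unique} cyclic successor $\sigma(i)$ with $a_{i\sigma(i)}\geq\gamma(A,\ovex)$, while $a_{ik}<\gamma(A,\ovex)$ for all $k\neq\sigma(i)$; hence $\max_{l\in N}a_{il}=a_{i\sigma(i)}$, and with $x^{\oplus}_i(A)\leq\bigoplus_{l}a_{il}$ (Remark~\ref{r:f}) this gives $x^{\oplus}_i(A)\leq a_{i\sigma(i)}$. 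Second, by Remark~\ref{r:cycleq} the quantities $e_i$ and $f_i$ are constant along each cycle $c_u$; I write $e^u$, $f^u$ for these common values.

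I would begin with the order inequalities of assertion (i). The relations $\undx\leq e_{\undx}$ and $f_{\ovex}\leq\ovex$ are immediate from $e_i=\max_{v\in c_u}\undx_v$ and $f_i=\min_{v\in c_u}\min(\ovex_v,x^{\oplus}_v(A))$. For $e_{\undx}<\gamma^*(A,\ovex)$ I combine the hypotheses: $\undx<c^*(A)$ forces $\max_i\undx_i<c(A)$, and with $\max_i\undx_i<\min_i\ovex_i$ this yields $\max_i\undx_i<\min(c(A),\min_i\ovex_i)=\gamma(A,\ovex)$, so $e^u<\gamma(A,\ovex)$. For $\gamma^*(A,\ovex)\leq f_{\ovex}$ I use $\gamma(A,\ovex)\leq\min_i\ovex_i$ together with $\gamma(A,\ovex)\leq c(A)\leq x^{\oplus}_i(A)$, the latter from $c^*(A)\leq x^{\oplus}(A)$. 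These four inequalities place $e_{\undx},f_{\ovex}$ in $\mbf{X}$ and show $e^u<\gamma(A,\ovex)\leq f^u$, so every interval $[e^u,f^u]$ is nonempty.

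I would then prove the inclusion $\supseteq$ of assertion (iii); applied to the vectors with all $\alpha_u=e^u$ and all $\alpha_u=f^u$ it yields $e_{\undx},f_{\ovex}\in V(A)$ and completes assertion (i). Let $x$ satisfy $x_i=\alpha_u\in[e^u,f^u]$ for $i\in c_u$, and compute $(A\otimes x)_i=\max_k\min(a_{ik},x_k)$. The successor term equals $\alpha_u$, since $\alpha_u\leq f^u\leq x^{\oplus}_i(A)\leq a_{i\sigma(i)}$ gives $\min(a_{i\sigma(i)},\alpha_u)=\alpha_u$. Every term with $k\neq\sigma(i)$ is at most $\alpha_u$, because conditions (i)--(ii) of Definition~\ref{def:Xconf} (according to whether $\undx_{\sigma(i)}<e_{\sigma(i)}$ or $\undx_{\sigma(i)}=e_{\sigma(i)}$, both possibilities being exhausted) give $a_{ik}\leq e_i=e^u\leq\alpha_u$. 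Hence $(A\otimes x)_i=\alpha_u=x_i$, so $x\in V(A)$; and $\undx_i\leq e^u\leq\alpha_u\leq f^u\leq\ovex_i$ shows $x\in\mbf{X}$.

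Finally I would establish the inclusion $\subseteq$ of assertion (iii) together with assertion (ii), which reduces to showing that each $x\in V(A)\cap\mbf{X}$ is constant along every cycle and then reading off the bounds. Write $c_u=(p_1,\dots,p_s)$ with $p_{t+1}=\sigma(p_t)$ (indices modulo $s$) and set $\beta=\max_t x_{p_t}$. The constancy is the one genuinely nontrivial step, and I would split it into two regimes. If $\beta>e^u$, then at a node $p_{t_0}$ where $x_{p_{t_0}}=\beta$ the off-cycle terms are $\leq e^u<\beta$, so the maximum is realized by the successor term $\min(a_{p_{t_0}p_{t_0+1}},x_{p_{t_0+1}})\leq x_{p_{t_0+1}}$, forcing $x_{p_{t_0+1}}=\beta$; iterating around the cycle makes $x$ constant. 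If $\beta\leq e^u$, then for every $t$ we have $x_{p_{t+1}}\leq e^u<\gamma(A,\ovex)\leq a_{p_tp_{t+1}}$, so the successor term equals $x_{p_{t+1}}$ and $x_{p_t}\geq x_{p_{t+1}}$; chaining cyclically again forces constancy. Once $x$ is constant on $c_u$, the inclusion $x\geq\undx$ gives $x_i\geq\max_{v\in c_u}\undx_v=e^u$, and $x\leq\ovex$ with $x\leq x^{\oplus}(A)$ gives $x_i\leq\min_{v\in c_u}\min(\ovex_v,x^{\oplus}_v(A))=f^u$, which is precisely assertion (ii) and the remaining inclusion. The argument uses $\mbf{X}$-conformism only through conditions (i)--(ii) of Definition~\ref{def:Xconf}; condition (iii) is not needed here.
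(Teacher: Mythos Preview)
Your proof is correct. The reorganization you propose is a genuine improvement in economy over the paper's approach, though the underlying observations are the same.

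The paper proceeds sequentially: it verifies $e_{\undx},f_{\ovex}\in V(A)$ by direct computation in (i), then proves (ii) by contradiction (assuming $x_i<e_i$, respectively $x_i>f_i$, and deriving inconsistency via a constancy-type argument around the cycle), and finally proves the $\subseteq$ inclusion of (iii) using (ii) together with another constancy argument split into the cases $x_{i_1^1}=e_{i_1^1}$ and $x_{i_1^1}>e_{i_1^1}$. Notably, the paper never explicitly verifies the $\supseteq$ inclusion of (iii); it only checks the two extremal cycle-constant vectors $e_{\undx}$ and $f_{\ovex}$.

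You invert the logical order. Proving $\supseteq$ once, for arbitrary $\alpha_u\in[e^u,f^u]$, both fills that gap and gives the membership $e_{\undx},f_{\ovex}\in V(A)$ for free. Proving cycle-constancy first (splitting on $\beta>e^u$ versus $\beta\leq e^u$, which is essentially the paper's split in (iii) rephrased in terms of the cycle maximum) and then reading off the bounds $e^u\leq\alpha_u\leq f^u$ from $\undx\leq x\leq\ovex$ and $x\leq x^{\oplus}(A)$ collapses the paper's separate contradiction proofs in (ii) and its case analysis in (iii) into a single pass. Your remark that condition (iii) of Definition~\ref{def:Xconf} is never invoked here is also correct, and matches the paper's proof, which likewise uses only conditions (i)--(ii) and the level-permutation structure.
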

\begin{proof} Let us first observe that by Lemma~\ref{l:cAperm} $A$
is a level $\gamma(A,\ovex)$-permutation matrix.

(i) The inequalities $ \undx\leq e_{\undx}< \gamma^*(A,\ovex)$
follow from the conditions $\undx<c^*(A)$, $\max_{\i\in N}
\undx_i<\min_{i\in N}\ovex_i$  and the definition of $e_{\undx}$. To
obtain $\gamma^*(A,\ovex)\leq f_{\ovex}\leq\ovex$, recall that
$x^{\oplus}(A)\geq \gamma^*(A,\ovex)$ as $x^{\oplus}(A)$ is the
largest eigenvector, and that $\overline{x}\geq \gamma^*(A,\ovex)$
by~\eqref{e:gax}, implying $f_{\ovex}\geq \gamma^*(A,\ovex)$.

To show that $e_{\undx},f_{\ovex}\in V(A)$ we need to prove that
$A\otimes e_{\undx}=e_{\undx},\ A\otimes f_{\ovex}=f_{\ovex}.$ As
matrix $A$ is level $\gamma(A,\ovex)$-permutation, for each $i \in
N$ there is $j\in N$ such that $a_{ij}\geq \gamma(A,\ovex)$.

To prove $A\otimes e_{\undx}=e_{\undx}$ observe that
$$(A\otimes e_{\undx})_{i }=\bigoplus\limits_{t\neq j}a_{i t}\otimes e_{t}\oplus a_{ij}\otimes e_j=a_{ij}\otimes e_j=e_j=e_i$$
because $i,j$ lie in the same cycle ($e_i=e_j$) and $a_{ij}\geq
\gamma(A,\ovex)>e_i\geq a_{it}$ for all $t\neq j$ by the definition
of $\mbf{X}$-conforming matrix.

To prove $A\otimes f_{\ovex}=f_{\ovex}$ observe that
$$(A\otimes f_{\ovex})_{i }=\bigoplus\limits_{t\in N}a_{i t}\otimes f_{t}=a_{ij}\otimes f_j=f_j=f_i$$
because of  $i,j$ are lying in the same cycle ($f_i=f_j$),
$a_{ij}=\bigoplus\limits_{t\in N}a_{it}\geq x^{\oplus}_i(A)\geq \ovex_i\otimes x^\oplus_i(A)\geq f_i$
and $a_{it}<\gamma(A,\ovex)\leq f_i=f_j$ for $t\neq j$.\\

(ii) Suppose that $(\exists x\in\mbf{X}\cap V(A)[e_{\undx}\nleq x]$,
i.e., there is at least one index $i\in N$ such that $\undx_i\leq
x_i<e_i$. Since $A$ is level $\gamma(A,\ovex)$-permutation and
$i\in N$, then there is a cycle $c=\{i_1,\dots,i_{s}\}$  such that
$i_1=i\in c$,
$$a_{i_r i_{r+1}}\geq \gamma(A,\ovex)>e_i>x_i\text{  and  }e_{i_r}=
\max_{k=i_1,\dots,i_s}\undx_k\text{  for  }r=1,\dots,s.$$

If $s=1$ we immediately obtain a contradiction with the definition of the vector $e_{\undx}$. Suppose
now that $s\geq 2$, then for the eigenvector $x=(x_1,\dots,x_n)$ we have
$$ x_{i_1}=(A\otimes  x )_{i_1}=\bigoplus\limits_{t\in N}a_{i_1t}\otimes x_{t}\geq a_{i_1 i _{2}}\otimes x_{i _{2}}=x_{i _{2}}$$
because of $a_{i_1 i_{2}}\geq \gamma(A,\ovex)>x_{i_1}$ and we obtain
$x_{i_1}\geq x_{i_2}$,
$$ x_{i_2}=(A\otimes  x )_{i_2}=\bigoplus\limits_{t\in N}a_{i_2t}\otimes x_{t}\geq a_{i_2 i _{3}}\otimes x_{i _{3}}=x_{i _{3}}$$
because of $a_{i_2 i_{3}}\geq \gamma(A,\ovex)>x_{i_1}\geq x_{i_2}$
and we obtain $x_{i_2}\geq x_{i_3}$. Proceeding in the same way for
$x_{i_3},\dots,x_{i_s}$ we have
$x_{i_1}=\dots=x_{i_s}<e_{i_1}=\ldots=e_{i_s}.$ However, this
implies
$x_{i_k}<\undx_{i_k}$ for some $i_k$, which is a contradiction. \\

Suppose that $(\exists x\in\mbf{X}\cap V(A)[x\nleq f_{\ovex}]$,
i.e., there is at least one index $i\in N$ such that $f_i< x_i\leq
\ovex_i$. Since $x$ is an eigenvector of $A$ the equality
$\bigoplus\limits_{t\in N}a_{vt}\geq x_v$ holds for each $v\in N$.
Moreover using part (ii) and that $A$ is $\mbf{X}$-conforming, we
have $a_{it}\leq e_i\leq f_i<x_i$ for each
$(i,t)\notin\{(i_1,i_2),\dots,(i_s,i_1)\}$. Let $i=i_1\in
c=\{i_1,\dots,i_{s}\}$. Then
$$x_{i_1}=(A\otimes  x )_{i_1}=\bigoplus\limits_{t\in N}a_{i_1t}\otimes x_{t}
= a_{i_1 i _{2}}\otimes x_{i _{2}}\Rightarrow x_{i _{2}}\geq x_{i_1},$$
$$x_{i_2}=(A\otimes  x )_{i_2}=\bigoplus\limits_{t\in N}a_{i_2t}\otimes x_{t}
= a_{i_2 i _{3}}\otimes x_{i _{3}}\Rightarrow x_{i _{3}}\geq x_{i_2},$$
since $a_{i_1t}< x_{i_1}$ for $t\neq i_2$ and $a_{i_2t}\leq e_{i_2}\leq f_{i_2}=f_{i_1}<x_{i_1}$ for $t\neq i_3$.
Proceeding in the same way for $x_{i_3},\dots,x_{i_s}$ we obtain that
\begin{equation*}
x_{i_1}=\dots=x_{i_s}>f_{i_1}=\ldots=f_{i_s}
=\min\limits_{k=i_1,\dots,i_s}(\ovex_k,\, x^\oplus_k(A))\text{  for  }r=1,\dots,s.
\end{equation*}
This implies that $x_{i_l}>\ovex_{i_l}$ or $x_{i_l}>x^{\oplus}_{i_l}(A)$ for some $i_l$:
in both cases, a contradiction.

(iii) By part (ii) each $x\in V(A)\cap\mbf{X}$ satisfies $e_{\undx}\leq x\leq f_{\ovex}$, and it remains to show that
$$x=(x_1,\dots,x_n)^T;\
x_i=\alpha_u\ \text{ for }i\in c_u\text{ and }1\leq u\leq k.$$ As
$A$ is $\mbf{X}$-conforming, $A$ is also a level
$\gamma(A,\ovex)$-permutation matrix such that $(i_1,\dots,i_n)$ is
a permutation of $N$ with $a_{i_ji_{j+1}}\geq \gamma(A,\ovex)$,
$$(i_1,\dots,i_n)=(i_1^1,\dots,i_{s_1}^1)\dots(i_1^k,\dots,i_{s_k}^k),$$
$c_u=(i_1^u,\dots,i_{s_u}^u), u=1,\dots,k $ are the elementary
cycles in $G(A,\gamma(A,\ovex))$ and $a_{rs}\leq
e_r<\gamma(A,\ovex)$ for $(r,s)\notin\{(i_1,i_2),\dots
(i_{n-1},i_n),(i_n,i_1)\}$.

Suppose that   $x\in V(A)\cap\mbf{X}$. Then  $e_\undx\leq x\leq
f_\ovex$ by (ii), and without loss of generality let us assume
$u=1$, that is, $c_1=(i_1^1,\ldots,i_{s_1}^1)$.

We shall consider two cases.\\
Case 1. $x_{i_1^1}=e_{i_1^1}$. Then we have that
$$x_{i_1^1}(=e_{i_1^1})=(A\otimes x)_{i_1^1}=\bigoplus\limits_{t\in N}a_{i_1^1t}\otimes x_{t}= \bigoplus\limits_{t\neq i_2^1}a_{i_1^1t}\otimes x_{t}\oplus a_{i_1^1i_2^1}\otimes x_{i_2^1} \geq a_{i_1^1i_2^1}\otimes x_{i_2^1}.$$
Hence we have that $x_{i_1^1}\geq  x_{i_2^1}$ because of $
a_{i_1^1i_2^1}\geq \gamma(A,\ovex)>e_{i_1^1}=x_{i_1^1}.$ For the
index $i_2^1$ we get
$$x_{i_2^1} =(A\otimes x)_{i_2^1}=\bigoplus\limits_{t\in N}a_{i_2^1t}\otimes x_{t}= \bigoplus\limits_{t\neq i_3^1}a_{i_2^1t}\otimes x_{t}\oplus a_{i_2^1i_3^1}\otimes x_{i_3^1}\geq a_{i_2^1i_3^1}\otimes x_{i_3^1}. $$
Hence we have that $x_{i_2^1}\geq  x_{i_3^1}$ because of $
a_{i_2^1i_3^1}\geq \gamma(A,\ovex)>e_{i_1^1}=x_{i_1^1}\geq
x_{i_2^1}.$ Proceeding in the same way we get
$x_{i_1^1}=\dots=x_{i^1_{s_1}}=e_{i_1^1}\in [e_{i_1^1},f_{i_1^1}]$.\\

Case 2. $x_{i_1^1}>e_{i_1^1}.$ Then we get
$$x_{i_1^1}=\bigoplus\limits_{t\in N}a_{i_1^1t}\otimes x_{t}= \bigoplus\limits_{t\neq i_2^1}a_{i_1^1t}\otimes x_{t}\oplus a_{i_1^1i_2^1}\otimes x_{i_2^1}=a_{i_1^1i_2^1}\otimes x_{i_2^1}$$
because inequalities $x_{i_1^1}>e_{i_1^1}\geq a_{i_1^1t}$ holds for each $t\neq i_2^1$
since $A$ is $\mbf{X}$-conforming. Thus, we obtain
$x_{i_1^1}=a_{i_1^1i_2^1}\otimes x_{i_2^1} $ and hence $x_{i_1^1}\leq x_{i_2^1} $. Similarly for $x_{i_2^1} $ we get
$$x_{i_2^1}=\bigoplus\limits_{t\in N}a_{i_2^1t}\otimes x_{t}= \bigoplus\limits_{t\neq i_3^1}a_{i_2^1t}\otimes x_{t}\oplus a_{i_2^1i_3^1}\otimes x_{i_3^1} =a_{i_2^1i_3^1}\otimes x_{i_3^1}$$
because of $x_{i_2^1}\geq x_{i_1^1}>e_{i_1^1}\geq a_{i_1^1t}$ for
each $t\neq i_3^1$ and  we obtain $x_{i_2^1}=a_{i_2^1i_3^1}\otimes
x_{i_3^1} $. Hence $x_{i_2^1}\leq x_{i_3^1} $. Proceeding in the
same way we get $x_{i_1^1}=\dots=x_{i^1_{s_1}}=\alpha_1\in
[e_{i_1^1},f_{i_1^1}]$.
\end{proof}

\begin{rem}\label{rem}
{\rm By \lemref{l:eigspace} (iii) it follows that the structure of
each eigenvector $x\in V(A)\cap\mbf{X}$ of a given
$\mbf{X}$-conforming matrix $A$ depends on elementary cycles in
$G(A,\gamma(A,\ovex))$ and all entries of $x$ corresponding to the
same cycle have an equal value.}
\end{rem}

\begin{theorem}\label{Th_WXRmaxminplus2}
Let $A = (a_{ij} )\in \BB(n,n)$ be a  matrix, $\mbf{X}=[\undx,
\ovex] \in \BB(n)$ be an interval vector. Assume that $
\undx<c^*(A)$ and $\max_{i\in N} \undx_i<\min_{i\in N} \ovex_i$.
Then $A$ has $\mbf{X}$-simple image eigenspace if and only if  $A$
is an $\mbf{X}$-conforming matrix.
\end{theorem}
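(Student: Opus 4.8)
The plan is to prove both implications by assembling the two preparatory lemmas. Since \lemref{l:cAperm} already shows that $\mbf{X}$-simple image eigenspace forces $A$ to be level $\gamma(A,\ovex)$-permutation, and \lemref{l:eigspace} describes $V(A)\cap\mbf{X}$ once $A$ is $\mbf{X}$-conforming, the theorem reduces to (a) deriving uniqueness of preimages from conformism, and (b) showing that each defining inequality of \defnref{def:Xconf} is forced by $\mbf{X}$-simplicity.

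For the \emph{if} part I would fix $x\in V(A)\cap\mbf{X}$ and any $y\in\mbf{X}$ with $A\otimes y=x$, and prove $y=x$ cycle by cycle, using \lemref{l:eigspace}(iii) to write $x_i=\alpha_u$ for $i\in c_u$ with $\alpha_u\in[e_i,f_i]$. Fix a cycle $c=(i_1,\dots,i_s)$ with value $\alpha$; conformism guarantees $a_{i_r t}\le e_c$ for every non-successor $t$. If $\alpha>e_c$ these off-cycle terms lie strictly below $\alpha$, so each equation $(A\otimes y)_{i_r}=\alpha$ is realised on the cycle edge, forcing $y_{i_{r+1}}\ge\alpha$ and $a_{i_r i_{r+1}}\ge\alpha$ all around $c$; if some $y_{i_{r+1}}$ exceeded $\alpha$ then that edge would satisfy $a_{i_r i_{r+1}}=\alpha=\min_{(k,l)\in c}a_{kl}=x^\oplus_{i_{r+1}}(A)=f_{i_{r+1}}$, and conformism condition (iii) would give $\ovex_{i_{r+1}}\le\alpha$, contradicting $y_{i_{r+1}}\le\ovex_{i_{r+1}}$. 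If instead $\alpha=e_c$ then $a_{i_r i_{r+1}}\ge\gamma(A,\ovex)>\alpha$ forces $y_{i_{r+1}}\le\alpha$, while the lower bound $y_{i_{r+1}}\ge\alpha$ follows from $y\ge\undx$ when $\undx_{i_{r+1}}=e_c$ and from conformism condition (i) when $\undx_{i_{r+1}}<e_c$. In either case $y=x$ on $c$, and as the cycles cover $N$ we conclude $y=x$.

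For the \emph{only if} part I would argue by contraposition from \lemref{l:cAperm}: a matrix with $\mbf{X}$-simple image eigenspace is level $\gamma(A,\ovex)$-permutation, so it remains to see that violating any of the three inequalities of \defnref{def:Xconf} produces an eigenvector in $V(A)\cap\mbf{X}$ with a second preimage in $\mbf{X}$. The cleanest case is (iii): if $a_{pq}=\min_{(k,l)\in c}a_{kl}=x^\oplus_q(A)=f_q$ but $\ovex_q>x^\oplus_q(A)$, I take the eigenvector equal to the bottleneck value $\beta=x^\oplus_q(A)$ on $c$ and to $\gamma(A,\ovex)$ on the remaining cycles (one checks $A\otimes x=x$ and $x\in\mbf{X}$ directly, using that $f_q=\beta$ forces $\ovex_v\ge\beta$ on $c$), and then raise the single coordinate $y_q$ from $\beta$ to $\ovex_q>\beta$: since $a_{pq}=\beta$ the predecessor equation is unaffected and $A\otimes y=x$ with $y\neq x$. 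For the violations of (i) and (ii) the relevant eigenvector sits at the level $e_c$, and one uses the offending off-cycle entry $a_{pk}\ge e_c$ to absorb a decrease of $y_q$ down to $\undx_q<e_c$ (for (i)), respectively the strict inequality $a_{pk}>e_c$ (for (ii)), so that lowering $y_q$ leaves every coordinate equation intact and yields $y\neq x$.

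The main obstacle I anticipate is precisely the constructions for conditions (i) and (ii) in the \emph{only if} direction: unlike the bottleneck case, where the candidate eigenvector is manifestly fixed, here I must produce a genuine element of $V(A)\cap\mbf{X}$ taking the value $e_c$ on the offending cycle, which requires controlling the cross-cycle entries so that no term of $A\otimes x$ overshoots $e_c$, and then verifying that the perturbed vector $y$ still lies in $\mbf{X}$ and satisfies every coordinate equation. A secondary technical point, needed to invoke condition (iii) in the \emph{if} part, is the identity $x^\oplus_v(A)=\min_{(k,l)\in c}a_{kl}$ for $v$ in a cycle $c$ of a level $\gamma(A,\ovex)$-permutation matrix, which I would establish from the permutation structure together with the smallness of the off-cycle entries.
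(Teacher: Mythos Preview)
Your overall architecture matches the paper's: the ``if'' direction via \lemref{l:eigspace}(iii) plus a coordinate-wise analysis, and the ``only if'' direction by exhibiting, for each violated clause of \defnref{def:Xconf}, an eigenvector in $V(A)\cap\mbf{X}$ with two preimages. Your case split in the ``if'' part (by $\alpha>e_c$ versus $\alpha=e_c$) is a harmless reorganisation of the paper's split (by the position of $y'_j$ relative to $[e_j,f_j]$), and your auxiliary identity $x^{\oplus}_v(A)=\min_{(k,l)\in c}a_{kl}$ is indeed provable under conformism, so that part is fine. Your construction for clause~(iii) in the ``only if'' part, placing the offending cycle at level $\beta=x^{\oplus}_q(A)$ and the rest at $\gamma(A,\ovex)$, is a valid alternative to the paper's choice of $f_{\ovex}$.

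There is, however, a genuine gap in your treatment of clauses~(i) and~(ii) of the ``only if'' part. Your candidate eigenvector, equal to $e_c$ on the offending cycle $c$ and $\gamma(A,\ovex)$ elsewhere, is in general \emph{not} a fixed point: for $i\in c$ and $t\notin c$ the off-permutation entry $a_{it}$ is only known to satisfy $a_{it}<\gamma(A,\ovex)$, so the term $a_{it}\otimes\gamma(A,\ovex)=a_{it}$ may well exceed $e_c$, and then $(A\otimes x)_i>e_c$. You correctly flag this obstacle but do not overcome it. The paper's key idea is to replace $e_c$ by
\[
d_u \;=\; \max\{\,a_{tv}\,:\; t\in c_u,\ v\in N,\ a_{tv}<\gamma(A,\ovex)\,\},
\]
the largest off-permutation entry emanating from a row of $c_u$. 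With the cycle set to level $d_u$ and the other cycles at $\gamma(A,\ovex)$, every off-permutation term is $\le d_u$ by construction while the on-cycle terms give exactly $d_u$, so the vector \emph{is} fixed. The second preimage is then obtained by lowering the coordinate $y_{i_{p+1}^u}$ to $\undx_{i_{p+1}^u}$, where $i_p^u$ is the row on which $d_u$ is attained (say $a_{i_p^u v}=d_u$); the term $a_{i_p^u v}\otimes y_v=d_u$ absorbs the loss in row $i_p^u$, and one checks $\undx_{i_{p+1}^u}<d_u$ via a short case distinction $p=r$ versus $p\neq r$ that uses the assumed violation of (i) or (ii). This choice of level is the missing ingredient in your argument.
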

\begin{proof}
The ``only if'' part:  As the matrix $A$ has $\mbf{X}$-simple image
eigenspace, by Lemma~\ref{l:cAperm}, matrix $A$ is level
$\gamma(A,\ovex)$-permutation. Suppose that $(i_1,\dots,i_n)$ is a
permutation of $N$ such that $a_{i_ji_{j+1}}\geq \gamma(A,\ovex) $
and
$$(i_1,\dots,i_n)= (i_1^1\dots
i_{s_1}^1)\dots(i_1^l,\dots,i_{s_l}^l),$$ where
$c_u=(i_1^u,\dots,i_{s_u}^u),u=1,\dots,l $ is an elementary cycle in
$G(A,c(A))$.

 For the sake of a contradiction suppose
that $(\exists u\in\{1,\dots,l\})(\exists i_r^u\in c_u=(i_1^u,\dots,i_{s_u}^u))$ such that

$$\undx_{i_{r+1}^u}< e_{i_{r+1}^u} \text{ and } a_{i_r^uk}\geq e_{i_r^u} \text{  for some }  k\neq i_{r+1}^u,k\in N$$
or
$$\undx_{i_{r+1}^u}= e_{i_{r+1}^u} \text{ and } a_{i_r^uk}> e_{i_r^u} \text{  for some }  k\neq i_{r+1}^u,k\in N$$
or
$$a_{i_ji_{j+1}}=\min\limits_{(k,l)\in c_u} a_{kl}=x^\oplus_{i_{j+1}}(A)=f_{i_{j+1}}\text{ and }\ovex_{i_{j+1}}> x^\oplus_{i_{j+1}}(A).$$

We shall consider three cases and
for each case we shall construct an eigenvector
$e'\in V(A)\cap\mbf{X}$ with $|S(A,e')\cap\mbf{X}|\geq 2$. \\
Denote  $d_u=\max\limits_{a_{tv}<\gamma(A,\ovex);t\in c_u,v\in
N}a_{tv}(=a_{i_p^uv})$. The first two cases will be treated
simultaneously.

\noindent
Case 1.  $\undx_{i_{r+1}^u}< e_{i_{r+1}^u}$ and
$a_{i_r^uk}\geq e_{i_r^u}(\Leftrightarrow
\undx_{i_{r+1}^u}< e_{i_{r+1}^u}=e_{i_r^u}\leq a_{i_r^uk})$.\\
Case 2.  $\undx_{i_{r+1}^u}=e_{i_{r+1}^u}$ and $a_{i_r^uk}> e_{i_r^u}(\Leftrightarrow
\undx_{i_{r+1}^u}=e_{i_{r+1}^u}=e_{i_r^u}< a_{i_r^uk})$.\\

Define vector $e'$ as follows
$$
e'_i=\begin{cases}
d_u, & \text{  if  } i\in c_u\\
\gamma(A,\ovex), & \text{  otherwise}.
\end{cases}
$$
We shall show that $A\otimes e'=e'$. Since the matrix $A$ is level
$\gamma(A,\ovex)$-permutation, the equalities $(A\otimes
e')_i=(A\otimes \gamma^*(A,\ovex))_i=\gamma(A,\ovex)=e'_i$ hold for
$i\notin c_u$, so it suffices to show that
 $$(A\otimes e')_{i_t^u}=e'_{i_t^u} \text{  for  }t=1,\dots,s_u.$$
Using the definition of $d_u$ and $e'$ we obtain
 $$(A\otimes e')_{i_t^u}=\bigoplus\limits_{j\neq i_{t+1}^u}a_{i_t^uj}\otimes e'_j\oplus a_{i_t^ui_{t+1}^u}\otimes e'_{i_{t+1}^u}\leq d_u\oplus a_{i_t^ui_{t+1}^u}\otimes d_u=d_u= e'_{i_t^u}
$$
and
 $$(A\otimes e')_{i_t^u}=\bigoplus\limits_{j\in N}a_{i_t^uj}\otimes e'_j\geq a_{i_t^ui_{t+1}^u}\otimes e'_{i_{t+1}^u}= a_{i_t^ui_{t+1}^u}\otimes d_u=d_u= e'_{i_t^u}.
$$

In particular, we have
$$
a_{i_t^ui_{t+1}^u}\otimes e'_{i_{t+1}^u}=e'_{i_t^u},\quad t=1,\ldots,s_u.
$$

To obtain a contradiction we shall show that the system $A\otimes y=e'$ has at least
two solutions, which will be denoted by $y',y''$. We have $a_{i_r^u k}\geq e_{i_r^u}$ and
$\undx_{i_{r+1}^u}<e_{i_{r+1}^u}$ (case 1), or $a_{i_r^u k}> e_{i_r^u}$ and
$\undx_{i_{r+1}^u}=e_{i_{r+1}^u}$ (case 2), and $a_{i_p^u v}=d_u$ for some indices $p$ and $v$.
Define
$$y'=e',\qquad
y''_i=\begin{cases}
\undx_i, & \text{  if  } i=i_{p+1}^u\\
e'_i, & \text{  otherwise}.
\end{cases}
$$
We need to show that $y'_{i_{p+1}^u}>y''_{i_{p+1}^u}$, to make sure
that $y'$ and $y''$ are actually different in this position, and
$y'\geq y''$. Next we also need to show that $A\otimes y''\geq e'$,
hence $A\otimes y''=e'$.

To see the difference, observe that if $p=r$, $a_{i_r^u k}\geq
e_{i_r^u}$ and $\undx_{i_{r+1}^u}<e_{i_{r+1}^u}$ then
$$
y''_{i_{r+1}^u}=\undx_{i_{r+1}^u}<e_{i_{r+1}^u}=e_{i_r^u}\leq a_{i_r^uk}\leq d_u=e'_{i_{r+1}^u},
$$
if $p=r$, $a_{i_r^u k}> e_{i_r^u}$ and
$\undx_{i_{r+1}^u}=e_{i_{r+1}^u}$ then
$$
y''_{i_{r+1}^u}=\undx_{i_{r+1}^u}=e_{i_{r+1}^u}=e_{i_r^u}< a_{i_r^uk}\leq d_u=e'_{i_{r+1}^u},
$$
and if $p\neq r$ then
$$
y''_{i_{p+1}^u}=\undx_{i_{p+1}^u}\leq e_{i_{p+1}^u}=e_{i_p^u}\leq a_{i_p^uk}< d_u=e'_{i_{p+1}^u}.
$$
By the definition of $d_u$ and $e'$ and since $A$ is a level
$\gamma(A,\ovex)$-permutation matrix,  we have $(A\otimes
y'')_{i_t^u}=a_{i_t^u i_{t+1}^u} e'_{i_{t+1}^u}=e'_{i_t^u}$ for each
$t\neq p$. For $t=p$ we obtain the following inequalities
 $$(A\otimes y'')_{i_p^u}=\bigoplus\limits_{j\neq i_{p+1}^u}a_{i_p^uj}\otimes y''_j\oplus
a_{i_p^ui_{p+1}^u}\otimes y''_{i_{p+1}^u}\geq
 a_{i_p^u v}\otimes e'_v\geq d_u=e'_{i_p^u},$$
 where $k\neq i_{r+1}^u$ and $e'_k\geq d_u$. This implies $A\otimes
 y''\geq e'$ hence $A\otimes y''=e'$.

Case 3. We will show that if
$a_{i_j^ui_{j+1}^u}=\min\limits_{(t,l)\in c_u}
a_{tl}=x^\oplus_{i_{j+1}^u}(A)=f_{i_{j+1}^u}$ and
$\ovex_{i_{j+1}^u}> x^\oplus_{i_{j+1}^u}(A)$ then the system
$A\otimes y=f_\ovex$ has at least two solutions: $y'=f_\ovex$ and
$y''=(y''_1,\dots,y''_n)^T$, where
$$y''_i=\begin{cases}
\ovex_i & \text{  if  }i=i_{j+1}^u\\
f_i & \text{  otherwise.  }
\end{cases}$$

Observe that the vectors $y',y''$ are different in the $i_{j+1}^u$th
position:
$$y''_{i_{j+1}^u}=\ovex_{i_{j+1}^u}>x^\oplus_{i_{j+1}^u}(A)=f_{i_{j+1}^u}=y'_{i_{j+1}^u}.$$
Since $A$ is level $\gamma(A,\ovex)$-permutation and $f_{\ovex}\geq
\gamma^*(A,\ovex)$ we have  $(A\otimes y'')_{i_t^u}= a_{i_t^u
i_{t+1}^u}\otimes f_{i_{t+1}^u}=f_{i_t^u}$ for each $t\neq j$. As
for the case of $j$, recalling that
$a_{i_j^ui_{j+1}^u}=f_{i_{j+1}^u}$ and
$f_{i_{j+1}^u}<\ovex_{i_{j+1}^u}$ we obtain the following equalities
\begin{equation*}
\begin{split}
(A\otimes y'')_{i_{j}^u}=&\bigoplus\limits_{k\neq i_{j+1}^u}a_{i_j^uk}\otimes y''_k\oplus a_{i_j^ui_{j+1}^u}\otimes y''_{i_{j+1}^u}
=\bigoplus\limits_{k\neq i_{j+1}^u}a_{i_j^uk}\otimes f_k\oplus a_{i_j^ui_{j+1}^u}\otimes \ovex_{i_{j+1}^u}
=\\
&\bigoplus\limits_{k\neq i_{j+1}^u}a_{i_j^uk}
\otimes f_k\oplus (a_{i_j^ui_{j+1}^u}\otimes f_{i_{j+1}^u})\otimes
\ovex_{i_{j+1}^u}=\bigoplus\limits_{k\in N}a_{i_j^uk}\otimes f_k=f_{i_j^u}.
\end{split}
\end{equation*}
Here we have used that the equality $a_{i_j^u i_{j+1}^u} =f_{i_{j+1}^u}$
and the inequality $f_{i_{j+1}^u}<\ovex_{i_{j+1}^u}$, both following from the conditions describing Case 3.

\medskip
The ``if'' part: Suppose that $A$ is an $\mbf{X}$-conforming matrix
and we shall show that $(\forall x\in
V(A)\cap\mbf{X})[|S(A,x)\cap\mbf{X}|=1].$ For the contrary suppose
that $(\exists x\in V(A)\cap\mbf{X})[|S(A,x)\cap\mbf{X}|>1]$. By
\lemref{l:eigspace} (iii) $x=(x_1,\dots,x_n)^T,\ x_i=\alpha_u\in
[e_i, f_i]\text{ for }i\in c_u,\,1\leq u\leq k$ and there is a
solution $y'\neq x$ of the system $A\otimes y=x $. Then there is
$j\in N$ such that $x_j\neq y'_j$. We shall consider three
possibilities: (i) $ y'_j<e_j$, (ii) $f_j< y'_j$,  (iii) $y'_j\in
[e_j, f_j]$.

(i) $ y_j'<e_j$. Since $A$ is level $\gamma(A,\ovex)$-permutation
there is $p\in N$ such that $a_{pj}\geq \gamma(A,\ovex)$, so that we
can substitute $p$ for $i_j$ and $j$ for $i_{j+1}$ in
Definition~\ref{def:Xconf} of $\mbf{X}$-conforming matrix. As
$\undx_j\leq y'_j<e_j$ by condition (i) of that definition we have
that $\undx_j<e_j\Rightarrow a_{pt}<e_p=e_j$ for $t\neq j$ and we
obtain
$$x_p=(A\otimes y')_p=\bigoplus\limits_{t\neq j}a_{pt}\otimes y'_t\oplus a_{pj}\otimes y_j'<e_p\leq x_p,$$
which is a contradiction.

(ii) $ f_j<y_j'$.  As $A$ is level $\gamma(A,\ovex)$-permutation
there is $p\in N$ such that $a_{pj}\geq \gamma(A,\ovex)$, and by
Remark~\ref{r:f} we have $a_{pj}=\bigoplus\limits_{k\in N}a_{pk}\geq
f_p(=f_j)$. We consider two possibilities:

1. $a_{pj}>f_p=f_j.$ Then we obtain the following
$$x_p=(A\otimes y')_p=\bigoplus\limits_{k\neq j}a_{pk}\otimes y'_k\oplus a_{pj}\otimes y_j'\geq a_{pj}\otimes y_j'>f_p\geq x_p,$$
and this is a contradiction.

2. $a_{pj}=f_p=f_j.$ At first we shall prove the following claim.\\

Claim. If $A$ is level $\gamma(A,\ovex)$-permutation and
$x^{\oplus}_r(A)=a_{rs},\ (r,s)\in c_u$ then
$a_{rs}=\min\limits_{(k,l)\in c_u}a_{kl}.$

{\it Proof of Claim.} For the contrary, suppose that $A$ is level
$c(A)$-permutation, $a_{rs}=x^{\oplus}_r(A)$ and
$a_{rs}>\min\limits_{(k,l)\in c_u}a_{kl}=a_{\alpha\beta}.$ Using
$$a_{\alpha t}<c(A)\leq a_{\alpha\beta}<a_{rs}=x^{\oplus}_r(A)=
x^{\oplus}_\beta(A),\quad t\neq \beta,$$ we obtain
$$x^{\oplus}_r(A)=x^{\oplus}_\alpha(A)=(A\otimes x^{\oplus}(A))_\alpha=
a_{\alpha\beta}\otimes
x^{\oplus}_\beta(A)=a_{\alpha\beta}<a_{rs}=x^{\oplus}_r(A).$$
This is a contradiction. Note that the equalities
$x^{\oplus}_\alpha(A)=x^{\oplus}_\beta(A)=x^{\oplus}_r(A)$ follow
from
Lemma~\ref{l:eigspace} (since $x^{\oplus}(A)$ is an eigenvector). \\

Now we will continue to analyze ``(ii), Case 2''. The assumptions
$a_{pj}=f_p=f_j$ and $ f_j<y_j'$ imply the inequalities
$a_{pj}=f_p=f_j\leq \min(\ovex_j,\, x^{\oplus}_j(A))<y'_j\leq
\ovex_j.$ Together with $a_{pj}\geq
x_p^{\oplus}(A)=x_j^{\oplus}(A)$, following from the fact that
$x^{\oplus}(A)$ is an eigenvector.
 Thus we have the equality
$a_{pj}=x^{\oplus}_j(A)$ and by Claim we obtain $a_{pj}=\min\limits_{(k,l)\in c_u}a_{kl}.$ Then  by the
definition of $\mbf{X}$-conforming matrix we get
$$a_{pj}=f_j=x_j^{\oplus}(A)\Rightarrow \ovex_j\leq x_j^{\oplus}.$$
We conclude the proof by the following  contradiction
$$\ovex_j\leq x_j^{\oplus}=f_j<y'_j\leq \ovex_j.$$

(iii)  $y'_j\in [e_j, f_j]$. As we also assumed $x_j\neq y'_j$, we
shall analyze two possibilities: $x_j<y'_j$ and $x_j>y'_j$.

Let $x_j>y'_j$. Using Remark~\ref{r:f} and Lemma~\ref{l:eigspace},
we obtain $a_{pj}=\bigoplus\limits_{k\in N}a_{pk}\geq f_p\geq
x_p=x_j$. By  the definition of $\mbf{X}$-conforming matrix we have
that $a_{pk}\leq e_p(=e_j\leq y'_j<x_j=x_p)$ for $k\neq j$ These
inequalities imply
$$x_p=(A\otimes y')_p=\bigoplus\limits_{k\neq j}a_{pk}\otimes y'_k\oplus a_{pj}\otimes y_j'=a_{pj}\otimes y_j' =y'_j< x_p,$$
which is a contradiction.\\

Let $x_j<y'_j$. Using Remark~\ref{r:f}, \lemref{l:eigspace} (i) and
the conditions $y'_j\in[e_j,f_j]$ and $x_j<y_j'$, we obtain that
$$a_{pj}=\bigoplus\limits_{k\neq j}a_{pk}\geq f_p(=f_j\geq
y'_j>x_j=x_p).$$ This implies that
$$x_p=(A\otimes y')_p=\bigoplus\limits_{k\neq j}a_{pk}\otimes y'_k\oplus a_{pj}\otimes y_j'\geq a_{pj}\otimes y_j'>x_p\otimes x_p=x_p,$$
which is a contradiction.
\end{proof}

\begin{remark}
\label{r:complexity} {\rm \thmref{Th_WXRmaxminplus2} implies that in
the case when $\undx<c^*(A)$ and $\max_{i\in N} \undx_i<\min_{i\in
N}\ovex_i$, the complexity of checking that a given matrix $A$ has
$\mbf{X}$-simple image eigenspace for a given interval vector
$\mbf{X}$ requires $O(n^2 \log n)$ arithmetic operations.}
\end{remark}

\section{Upwardness of $\mbf{X}$-simple image eigenvectors}
\label{s:upward}

In this section we will prove that $\mbf{X}$-simple image
eigenvectors have the following property: if $\alpha\otimes x$ is an
$\mbf{X}$-simple image eigenvector, then so is $\beta\otimes x$ for
every $\beta\geq\alpha$.

We shall first generalize some basic results concerning a system of
max-min linear equations $A\otimes x=b$  (see \cite{c2},\cite{z})
when the solution set is restricted to an interval $\mbf{X}$. We
follow here the basic theory of systems $A\otimes x=b$ over max-min
algebra developed in~\cite{z}. For a different exposition of the
same theory see, e.g.,~\cite{c2} (in particular, $M_j(A,b)$, as
defined below, corresponds to $I_j(A,b)\cup K_j(A,b)$ in~\cite{c2}).

For any $j\in N$ denote
 $$x^*_j(A,b)=\min\{b_i;\ a_{ij}>b_i \},$$
   whereby  $\min\emptyset=I$ by definition. Further  denote
$$M_j(A,b)=\{i\in N;a_{ij}\otimes x^*_j(A,b)=b_i\},$$
$$S(A,b)=\{x\in \BB(n);\ A\otimes x=b\}.$$

Unique solvability can be characterized using the notion of minimal
covering. If $D$ is a set and ${\cal E}\subseteq {\cal P}(D)$ is a
set of subsets of $D$, then ${\cal E}$ is said to be a covering of
$D$, if $\bigcup{\cal E}=D$ and a covering ${\cal E}$ of $D$ is
called minimal, if $\bigcup({\cal E}-{F})\neq D$ holds for every
$F\in {\cal E}$.

\begin{theorem}{\rm\cite{c2},\cite{z}}\label{t:c1}
Let   $A\in \BB(n,n)$ be a matrix  and $b\in \BB(n)$ be a vector.
Then the following conditions are equivalent:
\begin{enumerate}
\item
$S(A,b)\neq \emptyset$,
\item
$x^*(A,b)\in S(A,b)$,
\item
$\bigcup\limits_{j\in N}M_j(A,b)=N.$
\end{enumerate}

\end{theorem}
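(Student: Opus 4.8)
The plan is to establish $x^*(A,b)$ as the greatest subsolution of the system and then exploit the monotonicity of multiplication by $A$ to pass from an arbitrary solution to $x^*(A,b)$ itself. The engine behind all three equivalences is the single fact that $x^*(A,b)$ is the largest $x$ with $A\otimes x\leq b$.

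First I would verify that $A\otimes x^*(A,b)\leq b$ always holds and that $x^*(A,b)$ is maximal with this property. For the inequality, fix $i,j$: if $a_{ij}\leq b_i$ then trivially $a_{ij}\otimes x^*_j(A,b)\leq a_{ij}\leq b_i$, whereas if $a_{ij}>b_i$ then $i$ belongs to the index set $\{i'\colon a_{i'j}>b_{i'}\}$ defining $x^*_j(A,b)$, so $x^*_j(A,b)\leq b_i$ and again $a_{ij}\otimes x^*_j(A,b)\leq b_i$; taking $\bigoplus_j$ yields $(A\otimes x^*(A,b))_i\leq b_i$. For maximality, if $A\otimes x\leq b$ then $a_{ij}\otimes x_j\leq b_i$ for all $i,j$; fixing $j$ and letting $i$ range over the indices with $a_{ij}>b_i$ forces $x_j\leq b_i$ for each such $i$, hence $x_j\leq\min\{b_i\colon a_{ij}>b_i\}=x^*_j(A,b)$, i.e. $x\leq x^*(A,b)$.

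The equivalence between (i) and (ii) is then immediate in one direction and a one-line monotonicity argument in the other. Since (ii) clearly implies (i), I would only treat (i)$\Rightarrow$(ii): any solution $x$ satisfies $A\otimes x=b\leq b$, so $x\leq x^*(A,b)$ by the maximality just proved; because multiplication by $A$ is order-preserving, $b=A\otimes x\leq A\otimes x^*(A,b)\leq b$, which forces $A\otimes x^*(A,b)=b$, that is, $x^*(A,b)\in S(A,b)$. For (ii)$\Leftrightarrow$(iii) the sets $M_j(A,b)$ enter: as $A\otimes x^*(A,b)\leq b$ always holds, membership $x^*(A,b)\in S(A,b)$ reduces to $(A\otimes x^*(A,b))_i\geq b_i$ for every $i$. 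For a fixed $i$ every term $a_{ij}\otimes x^*_j(A,b)$ is $\leq b_i$, so their $\bigoplus$ equals $b_i$ exactly when some term attains $b_i$, i.e. when $a_{ij}\otimes x^*_j(A,b)=b_i$ for some $j$; by definition this says precisely $i\in\bigcup_j M_j(A,b)$. Demanding this for all $i$ is the condition $\bigcup_{j\in N}M_j(A,b)=N$.

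The main obstacle is purely a matter of careful bookkeeping rather than any genuine difficulty: one must check $a_{ij}\otimes x^*_j(A,b)\leq b_i$ in both cases above, paying attention to the convention $\min\emptyset=I$, which correctly imposes no upper constraint on $x_j$ when no row forces it down, and one must translate the pointwise attainment condition into the covering statement (iii). Once the maximality of $x^*(A,b)$ is secured, the monotonicity step giving (i)$\Rightarrow$(ii) and the termwise analysis giving (ii)$\Leftrightarrow$(iii) are direct, so no delicate estimate or case explosion is expected.
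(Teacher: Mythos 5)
Your proof is correct. Note that the paper itself gives no proof of this statement: it is quoted as a known result with citations to Cechl\'arov\'a and Zimmermann, so there is no in-paper argument to compare against. Your route --- establishing $x^*(A,b)$ as the greatest subsolution of $A\otimes x\leq b$, then using monotonicity of $x\mapsto A\otimes x$ for (i)$\Rightarrow$(ii) and the termwise attainment analysis for (ii)$\Leftrightarrow$(iii) --- is exactly the classical residuation (principal solution) argument found in those references, and you handle the $\min\emptyset=I$ convention correctly.
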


\begin{theorem}{\rm\cite{c2},\cite{z}}\label{t:c3}
Let   $A\in \BB(n,n)$ be a matrix  and $b\in \BB(n)$ be a vector.
Then $S(A,b)=\{x^*(A,b)\}$ if and only if
\begin{enumerate}
\item
$\bigcup\limits_{j\in N}M_j(A,b)=N,$
\item
$\bigcup\limits_{j\in N'}M_j(A,b)\neq N \text{ for any
}N'\subseteq N, N'\neq N.$
\end{enumerate}

\end{theorem}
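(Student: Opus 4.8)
The final statement to prove is Theorem~\ref{t:c3}, which characterizes when the solution set $S(A,b)$ is a singleton equal to the principal solution $x^*(A,b)$. This is a classical result cited from~\cite{c2},\cite{z}, so the plan is to reconstruct the standard covering-theoretic argument built on top of Theorem~\ref{t:c1}.

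The plan is to proceed as follows. First I would invoke Theorem~\ref{t:c1}: condition (i) of the present statement is exactly $\bigcup_{j\in N}M_j(A,b)=N$, which by Theorem~\ref{t:c1} is equivalent to $S(A,b)\neq\emptyset$ and to $x^*(A,b)\in S(A,b)$. Thus condition (i) already guarantees that $x^*(A,b)$ is the greatest solution, and the entire question reduces to when it is the \emph{only} solution. The heart of the matter is therefore to show that, given solvability, uniqueness is equivalent to the minimality condition (ii), namely that $\{M_j(A,b)\colon j\in N\}$ is a minimal covering of $N$ in the sense that no proper subfamily still covers $N$.

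For the direction from uniqueness to (ii), I would argue by contraposition: suppose (ii) fails, so there is an index $j_0$ with $\bigcup_{j\neq j_0}M_j(A,b)=N$. The idea is then to \emph{lower} the $j_0$-th coordinate of $x^*(A,b)$ and produce a second solution distinct from $x^*$. Concretely, I would define a vector $y$ agreeing with $x^*(A,b)$ in every coordinate except the $j_0$-th, where $y_{j_0}$ is set strictly below $x^*_{j_0}(A,b)$ (down to $O$, say). Since every row index $i\in N$ is covered by some $M_j(A,b)$ with $j\neq j_0$, the value $b_i$ is already attained by a term $a_{ij}\otimes x^*_j(A,b)$ not involving column $j_0$; decreasing $y_{j_0}$ can only decrease $a_{ij_0}\otimes y_{j_0}$, so $(A\otimes y)_i=b_i$ is preserved for every $i$. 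Hence $y\in S(A,b)$ and $y\neq x^*(A,b)$, contradicting uniqueness. I expect this is the cleaner of the two directions.

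For the converse, assuming (i) and (ii) I must show $S(A,b)=\{x^*(A,b)\}$. Here the key fact is that $x^*(A,b)$ dominates every solution, so any $y\in S(A,b)$ satisfies $y\leq x^*(A,b)$; it remains to rule out $y_j<x^*_j(A,b)$ in any coordinate. The main obstacle, and the step requiring the most care, is this: if some coordinate $y_{j_0}$ could be strictly lowered while staying a solution, one must extract from this the failure of minimality of the covering. The argument is that if $y_{j_0}<x^*_{j_0}(A,b)$, then for each $i\in M_{j_0}(A,b)$ the term $a_{ij_0}\otimes y_{j_0}$ can no longer supply the value $b_i$ (by the defining property $a_{ij_0}\otimes x^*_{j_0}(A,b)=b_i$ together with $a_{ij_0}>$ the relevant threshold encoded in $x^*_{j_0}$), so the equality $(A\otimes y)_i=b_i$ forces some other column $j\neq j_0$ to cover $i$, giving $i\in M_j(A,b)$. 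This shows every index formerly covered only through $j_0$ is in fact covered by the remaining columns, so $\bigcup_{j\neq j_0}M_j(A,b)=N$, contradicting (ii). Making the threshold bookkeeping in this last implication precise — tracking exactly when $a_{ij_0}\otimes x^*_{j_0}=b_i$ with the strict inequality $a_{ij_0}>b_i$ built into the definition of $x^*_{j_0}$ — is where the argument is most delicate, and I would handle it by carefully distinguishing the cases $a_{ij_0}\leq b_i$ and $a_{ij_0}>b_i$ when analyzing each row $i$.
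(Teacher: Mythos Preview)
The paper does not prove this theorem: it is quoted from~\cite{c2},\cite{z} without argument, so there is no ``paper's own proof'' to compare against. Your reconstruction of the forward direction (uniqueness $\Rightarrow$ minimality of the covering) is the standard one and is essentially correct, modulo the harmless edge case $x^*_{j_0}=O$ where one cannot strictly lower the $j_0$-th coordinate.

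The converse direction, however, has a genuine gap that you yourself flag but do not resolve. Your key claim is that if $y\in S(A,b)$ with $y_{j_0}<x^*_{j_0}(A,b)$, then for every $i\in M_{j_0}(A,b)$ the term $a_{ij_0}\otimes y_{j_0}$ can no longer equal $b_i$. This is false precisely in the case $a_{ij_0}=b_i$ (which falls under your ``$a_{ij_0}\leq b_i$'' branch): if $b_i\leq y_{j_0}<x^*_{j_0}$ then $a_{ij_0}\otimes y_{j_0}=\min(b_i,y_{j_0})=b_i$ still holds, and column $j_0$ continues to cover row $i$. Your case split therefore does not close the argument.

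In fact this gap cannot be repaired, because the statement as written in the paper is not correct over a general bounded chain $\BB$. Take $n=1$, $A=(0.5)$, $b=(0.5)$ in $\BB=[0,1]$. Then $\{i:a_{i1}>b_i\}=\emptyset$, so $x^*_1=I=1$, and $M_1=\{1\}$ since $a_{11}\otimes 1=0.5=b_1$. Conditions (i) and (ii) hold trivially, yet $S(A,b)=\{x\in[0,1]:\min(0.5,x)=0.5\}=[0.5,1]$ is far from a singleton. The genuine unique-solvability criterion in Cechl\'arov\'a~\cite{c2} is more delicate (it uses three index sets $I_j,J_j,K_j$, and the paper itself notes that its $M_j$ corresponds only to $I_j\cup K_j$); the version recorded here as Theorem~\ref{t:c3} appears to be an oversimplified transcription. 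So your instinct that the ``threshold bookkeeping'' in the case $a_{ij_0}\leq b_i$ is ``most delicate'' was exactly right --- that is where the stated equivalence actually breaks.
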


Now we shall formulate a generalized (interval) version of above
results. Let  $ \mbf{X}$ be an interval vector, $A\in \BB(n,n)$ and
$x,b\in \mbf{X}$. Without loss of generality, we can suppose that
$b_i>\undx_i$ for all $i\in N$, for the following reason. If
$b\geq\undx$ denote
by $N_\undx=\{i\in N;b_i=\undx_i\}.$ Then any solution $x$ of $A\otimes x=b$ has $x_j=\undx_j$ for all $j\in D_i=\{k\in N;\;a_{ik}>\undx_i\}$. Thus we can delete the equations with indices from $N_\undx$ and columns of $A$ with indices from $\bigcup\limits_{i\in N_\undx}D_i$   and the solutions of the original and reduced systems correspond to each other by putting $x_j=\undx_j$ for  each $i\in N_\undx$. Notice that if the system $A\otimes x=b$ is solvable and $\undx_k\neq \undx_l$ then $ D_k\cap D_l=\emptyset.$ \\

Now we shall redefine the vector $x^*(A,b)$ and then we can
reformulate the assertions of above theorems for $x\in\mbf{X}$ and
$b\in\mbf{X}$. Notice that if we consider $\undx$ instead of the
vector $(O,\dots O)^T$ and $\ovex$ instead of the vector
$(I,\dots,I)^T$ the proofs of the next three theorems are similar to
the proofs of above theorems.

Let $ \mbf{X}$ be an interval vector   and $A\otimes x=b>\undx$ be a
system of $(\max,\min)$ linear equations. For any $j\in N$ denote
 $$\tilde x^*_j(A,b)=\min\{b_i;\ a_{ij}>b_i \},
 \text{ whereby  }\min\emptyset=\ovex_j.$$ Further denote
$$\tilde M_j(A,b)=\{i\in N;a_{ij}\otimes \tilde x^*_j(A,b)=b_i\},$$
$$\tilde S(A,b)=\{x\in \mbf{X};\ A\otimes x=b\}.$$

\begin{theorem}\label{t:z}
Let   $A\in \BB(n,n)$ be a matrix  and $b\in \mbf{X}$ be a vector.
Then the following conditions are equivalent:
\begin{enumerate}
\item
$\tilde S(A,b)\neq \emptyset$,
\item
$\tilde x^*(A,b)\in \tilde S(A,b)$,
\item
$\bigcup\limits_{j\in N}\tilde M_j(A,b)=N.$
\end{enumerate}
\end{theorem}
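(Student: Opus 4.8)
Everything reduces to understanding the vector $\tilx^*(A,b)$, which is built to be the interval analogue of the principal solution $x^*(A,b)$ of \thmref{t:c1}: the plan is to show that $\tilx^*(A,b)$ is the greatest element of the set $\{x\in\mbf{X}\colon A\otimes x\leq b\}$. Concretely I would first prove two facts. (A) $\tilx^*(A,b)\in\mbf{X}$ and $A\otimes\tilx^*(A,b)\leq b$: for the inequality, fix $i$ and check $a_{ij}\otimes\tilx^*_j(A,b)\leq b_i$ for every $j$, splitting into the case $a_{ij}\leq b_i$ (immediate) and the case $a_{ij}>b_i$, where $\tilx^*_j(A,b)\leq b_i$ holds directly because $i$ is one of the rows over which the defining minimum is taken. (B) Every $x\in\mbf{X}$ with $A\otimes x\leq b$ satisfies $x\leq\tilx^*(A,b)$: fixing a column $j$, for each row $i$ with $a_{ij}>b_i$ the bound $(A\otimes x)_i\leq b_i$ forces $a_{ij}\otimes x_j\leq b_i<a_{ij}$, hence $x_j\leq b_i$; taking the minimum over such $i$ and using $x_j\leq\ovex_j$ gives $x_j\leq\tilx^*_j(A,b)$.

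Granting (A) and (B), the three equivalences become formal, exactly as in \thmref{t:c1}. The implication (ii)$\Rightarrow$(i) is immediate, since $\tilx^*(A,b)\in\tilS(A,b)$ already shows $\tilS(A,b)\neq\emptyset$. For (i)$\Rightarrow$(ii) I take any $x\in\tilS(A,b)$; then $A\otimes x=b$, so (B) gives $x\leq\tilx^*(A,b)$, and monotonicity of $y\mapsto A\otimes y$ together with (A) yields $b=A\otimes x\leq A\otimes\tilx^*(A,b)\leq b$, whence $A\otimes\tilx^*(A,b)=b$ and $\tilx^*(A,b)\in\tilS(A,b)$. Finally (ii)$\Leftrightarrow$(iii) is read off row by row: by (A) one always has $(A\otimes\tilx^*(A,b))_i\leq b_i$, and equality in row $i$ holds precisely when some term attains the value $b_i$, i.e.\ $a_{ij}\otimes\tilx^*_j(A,b)=b_i$ for some $j$, which is exactly $i\in\tilM_j(A,b)$; thus $A\otimes\tilx^*(A,b)=b$ if and only if every index belongs to some $\tilM_j(A,b)$, that is $\bigcup_{j\in N}\tilM_j(A,b)=N$.

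The genuine work sits entirely in facts (A)--(B), and the delicate point there is the membership $\tilx^*(A,b)\in\mbf{X}$. The upper bound $\tilx^*(A,b)\leq\ovex$ is forced by the convention $\min\emptyset=\ovex_j$ (truncating the defining minimum at $\ovex_j$), which takes over the classical role of the top element $I$; the lower bound $\tilx^*(A,b)\geq\undx$ is the subtler point and is where the standing reduction to the case $b_i>\undx_i$ for all $i$ --- deleting the equations indexed by $N_\undx=\{i\colon b_i=\undx_i\}$ together with the associated columns, as explained just before the statement --- must be used, together with truncating the candidate into $\mbf{X}$. I expect this interval bookkeeping to be the only obstacle; once it is in place the argument is a line-by-line transcription of the proof of \thmref{t:c1} with $O$ and $I$ replaced throughout by $\undx$ and $\ovex$, which is exactly why only a remark on that similarity is needed.
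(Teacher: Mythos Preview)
Your proposal is correct and follows exactly the approach the paper indicates: the paper gives no separate proof of \thmref{t:z} but simply remarks that replacing $(O,\dots,O)^T$ by $\undx$ and $(I,\dots,I)^T$ by $\ovex$ makes the argument of \thmref{t:c1} go through, and your facts (A)--(B) together with the subsequent formal equivalences are precisely that transcription. You have in fact been more careful than the paper by flagging the one genuine subtlety, namely the membership $\tilx^*(A,b)\in\mbf{X}$ (note that the convention $\min\emptyset=\ovex_j$ alone does not literally truncate the minimum when the defining set is nonempty, so your remark that one may need to truncate into $\mbf{X}$ is exactly right).
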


\begin{theorem}\label{t:c31}
Let   $A\in \BB(n,n)$ be a matrix   and $b\in  \mbf{X}$ be a vector.
Then $\tilde S(A,b)=\{\tilde x^*(A,b)\}$ if and only if
\begin{enumerate}
\item
$\bigcup\limits_{j\in N}\tilde M_j(A,b)=N,$
\item
$\bigcup\limits_{j\in N'}\tilde M_j(A,b)\neq N \text{ for any
}N'\subseteq N, N'\neq N.$
\end{enumerate}

\end{theorem}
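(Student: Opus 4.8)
The plan is to treat this as the interval analogue of \thmref{t:c3}, obtained by replacing the global top $(I,\dots,I)^T$ and bottom $(O,\dots,O)^T$ by the interval bounds $\ovex$ and $\undx$; accordingly I would rerun the proof of \thmref{t:c3} with the modified conventions for $\tilde x^*$ and $\tilde M_j$. The common starting point is \thmref{t:z}: under condition (i) we already know $\tilde S(A,b)\neq\emptyset$ and that $\tilde x^*(A,b)$ is its greatest element, so the whole statement reduces to deciding when $\tilde x^*(A,b)$ is the \emph{only} element of $\tilde S(A,b)$. Here the set $\tilde M_j(A,b)$ should be read as the collection of rows $i$ whose equation is tightly attained through column $j$ at the greatest solution, i.e. $a_{ij}\otimes\tilde x^*_j(A,b)=b_i$, and condition (i) says exactly that these sets cover $N$.

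For the sufficiency direction ((i) and (ii) imply $\tilde S(A,b)=\{\tilde x^*(A,b)\}$) I would use the standard fact that a minimal covering has a private element for each member: for every $j$ there is a row $i_j\in\tilde M_j(A,b)$ lying in no other $\tilde M_{j'}(A,b)$. Take any $y\in\tilde S(A,b)$; since $\tilde x^*$ is greatest, $y\leq\tilde x^*$, and for each row $i_j$ the column $j'$ that attains $b_{i_j}$ for $y$ must also attain it for $\tilde x^*$, forcing $i_j\in\tilde M_{j'}(A,b)$ and hence $j'=j$ by privacy. Thus column $j$ attains row $i_j$ at $y$ as well, and from this I would deduce, coordinate by coordinate, that $y_j=\tilde x^*_j$ for all $j$, so $y=\tilde x^*$. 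The only change relative to \thmref{t:c3} is that $y$ now ranges over $[\undx,\ovex]$ rather than over all of $\BB(n)$, which leaves the attainment bookkeeping untouched.

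For the necessity direction I would argue by contraposition on (ii), with (i) immediate from \thmref{t:z}. If some proper $N'\subsetneq N$ already satisfies $\bigcup_{j\in N'}\tilde M_j(A,b)=N$, choose $j_0\in N\setminus N'$ and define $y$ by lowering only the $j_0$-th coordinate of $\tilde x^*(A,b)$ down to $\undx_{j_0}$. This $y$ still lies in $\mbf{X}$ (the floor $\undx_{j_0}$ replacing the floor $O$ of the non-interval case), and since every row is attained by some column different from $j_0$, monotonicity of $A\otimes(\cdot)$ gives $A\otimes y=b$; hence $y\in\tilde S(A,b)$ is a second solution, contradicting uniqueness.

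The hard part will be the strictness and feasibility bookkeeping forced by the interval bounds, which is exactly where the convention $\min\emptyset=\ovex_j$ and the preliminary reduction to $b>\undx$ must be used. Concretely, in the necessity step I must guarantee $\undx_{j_0}<\tilde x^*_{j_0}$ so that the lowered vector is genuinely distinct, and in the sufficiency step I must rule out ``slack'' rows where $a_{ij}=b_i$ could permit $y_j<\tilde x^*_j$ without breaking the equation; both points amount to checking that the tight attainment recorded by $\tilde M_j(A,b)$ cannot survive a decrease of the corresponding coordinate within $[\undx_j,\tilde x^*_j]$. I expect no genuinely new idea beyond \thmref{t:c3} here, only the careful transfer of its strictness arguments from the endpoints $O,I$ to the interval endpoints $\undx,\ovex$, as the authors indicate when they say the proofs are ``similar''.
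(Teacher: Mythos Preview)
Your proposal is correct and takes essentially the same approach as the paper, which does not give an independent proof of this theorem at all: it simply remarks that, upon replacing $(O,\dots,O)^T$ by $\undx$ and $(I,\dots,I)^T$ by $\ovex$, the argument for \thmref{t:c3} goes through verbatim. Your write-up is in fact more detailed than the paper's, since you spell out both directions and flag the places (strictness of $\undx_{j_0}<\tilde x^*_{j_0}(A,b)$ in the necessity step, and the role of the reduction to $b>\undx$) where the interval endpoints require care.
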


We will now state and prove the main result of this section.

\begin{theorem}
Let $x\in V(A)$, $\alpha\in [\bigoplus\limits_{i\in N}\undx_i,\bigotimes\limits_{i\in N}\ovex_i]$ and $ \alpha\otimes x$ is $\mbf{X}$-simple image eigenvector. Then $\beta\otimes x$ is $\mbf{X}$-simple image eigenvector   for   $\beta\geq\alpha.$
\end{theorem}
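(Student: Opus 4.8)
The plan is to exploit the structural description of $\mbf{X}$-simple image eigenvectors provided by the interval theory in Theorems~\ref{t:z} and~\ref{t:c31}, together with the explicit form of $\tilde x^*$ and $\tilde M_j$. Since $x\in V(A)$ and $\alpha$ lies in $[\bigoplus_i\undx_i,\bigotimes_i\ovex_i]$, the vector $\alpha\otimes x$ is a constant-truncation of an eigenvector, and by idempotency $\alpha\otimes x$ is itself a fixed point of $A$ lying in $\mbf{X}$; the same holds for $\beta\otimes x$ whenever $\alpha\leq\beta\leq\bigotimes_i\ovex_i$. (I should first check that the upper bound on $\alpha$ guarantees $\beta\otimes x\in\mbf{X}$, which is immediate from $\beta\otimes x_i\leq\beta\leq\bigotimes_i\ovex_i\leq\ovex_i$ and $\beta\otimes x_i\geq\alpha\otimes x_i\geq\undx_i$.) So both $\alpha\otimes x$ and $\beta\otimes x$ are legitimate eigenvectors in $V(A)\cap\mbf{X}$, and the question reduces to transferring the unique-solvability condition from $b=\alpha\otimes x$ to $b'=\beta\otimes x$.

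First I would set $b=\alpha\otimes x$ and $b'=\beta\otimes x$ and compute how the quantities $\tilde x^*_j(A,b)$ and the covering sets $\tilde M_j(A,b)$ change as the right-hand side is scaled up from $b$ to $b'$. The key observation to establish is a monotonicity/invariance property: the index $i$ governing each equation, i.e.\ the pattern of which entries $a_{ij}$ satisfy $a_{ij}>b_i$ versus $a_{ij}\leq b_i$, is essentially preserved under multiplying $b$ by a larger constant $\beta\geq\alpha$, because raising a constant threshold applied uniformly to all coordinates does not reverse the strict comparisons $a_{ij}>b_i$ in a way that destroys the minimal covering. I expect to show that $\tilde M_j(A,b')\supseteq\tilde M_j(A,b)$ (or an exact correspondence) so that condition (i) of Theorem~\ref{t:c31} is inherited, and that the minimality condition (ii) is likewise preserved because no index can be covered by a strictly smaller subfamily after scaling up.

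Having reformulated $\mbf{X}$-simplicity of $\alpha\otimes x$ via Theorem~\ref{t:c31} as the statement that $\{\tilde M_j(A,b)\}_{j\in N}$ is a \emph{minimal} covering of $N$, I would then argue that the corresponding family $\{\tilde M_j(A,b')\}_{j\in N}$ for $b'=\beta\otimes x$ is again a minimal covering of $N$, whence $\tilde S(A,b')=\{\tilde x^*(A,b')\}=\{\beta\otimes x\}$ and $\beta\otimes x$ is an $\mbf{X}$-simple image eigenvector. The crucial point is that both $b$ and $b'$ arise from the same underlying eigenvector $x$ by uniform scaling, so the sets $\{j:a_{ij}>b_i\}$ and the attainment sets $M_j$ are controlled by the cyclic permutation structure of $G(A,\gamma(A,\ovex))$ described in Lemma~\ref{l:eigspace} and Remark~\ref{rem}; the comparisons that determine the covering are between entries of $A$ and constant multiples of $x$, and these are stable under increasing the scalar.

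The main obstacle I anticipate is the boundary behaviour at indices where $a_{ij}$ equals the scaled value $b_i$ or $b'_i$ exactly, i.e.\ the borderline cases in the strict-versus-nonstrict inequalities defining $\tilde x^*_j$ and $\tilde M_j$. Scaling $b$ up to $b'$ can turn a former equality $a_{ij}=b_i$ into a strict inequality $a_{ij}<b'_i$ (when $a_{ij}$ is below the new level) or leave it as $a_{ij}\geq b'_i$ (when $a_{ij}$ is saturated at $I$ or large), and I must verify that such transitions only \emph{enlarge} or preserve the covering sets and never break minimality. Handling these threshold crossings carefully — and in particular confirming that the unique minimal covering for $b$ remains the unique minimal covering for $b'$ — is where the real work lies; the rest is a routine application of the interval uniqueness criterion.
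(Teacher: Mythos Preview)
Your overall strategy---reduce $\mbf{X}$-simplicity to the minimal-covering criterion of Theorem~\ref{t:c31} and compare the families $\{\tilde M_j(A,\alpha\otimes x)\}$ and $\{\tilde M_j(A,\beta\otimes x)\}$---is exactly the paper's approach. However, you have the crucial inclusion backwards, and this is not a minor slip: the direction matters for the logic of the argument.

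You write that you expect $\tilde M_j(A,b')\supseteq\tilde M_j(A,b)$ and that threshold crossings ``only \emph{enlarge} or preserve the covering sets and never break minimality''. Both parts are wrong. First, enlarging the sets in a minimal covering can certainly destroy minimality (if $\tilde M_1=\{1\}$, $\tilde M_2=\{2\}$ is minimal for $N=\{1,2\}$ and $\tilde M_1$ grows to $\{1,2\}$, the family is no longer minimal). Second, and more importantly, the actual inclusion goes the other way: one has $\tilde M_j(A,\beta\otimes x)\subseteq \tilde M_j(A,\alpha\otimes x)$ for every $j$. This is proved via the explicit relation
\[
\tilde x^*_j(A,\alpha\otimes x)=
\begin{cases}
\tilde x^*_j(A,\beta\otimes x), & \text{if }\tilde x^*_j(A,\alpha\otimes x)=\ovex_j,\\
\alpha\otimes \tilde x^*_j(A,\beta\otimes x), & \text{otherwise},
\end{cases}
\]
from which $k\in\tilde M_j(A,\beta\otimes x)$ forces $a_{kj}\otimes\tilde x^*_j(A,\alpha\otimes x)=\alpha\otimes x_k$, i.e.\ $k\in\tilde M_j(A,\alpha\otimes x)$. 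Once you have this correct direction, minimality transfers immediately: if some proper subfamily $\{\tilde M_j(A,\beta\otimes x)\}_{j\in N'}$ covered $N$, then by the inclusion so would $\{\tilde M_j(A,\alpha\otimes x)\}_{j\in N'}$, contradicting minimality for $\alpha$. Your appeal to the cycle structure of Lemma~\ref{l:eigspace} and Remark~\ref{rem} is unnecessary here; the argument is a direct computation with $\tilde x^*_j$ and $\tilde M_j$ and does not use $\mbf{X}$-conformism at all.
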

\begin{proof} Suppose that $x\in V(A)$, $\alpha\leq\beta$ and $|\tilde S(A,\alpha\otimes x)|=1.$ To show the assertion it suffices  to prove that
$\bigcup\limits_{j\in N}\tilde M_j(A,\beta\otimes x)\subseteq\bigcup\limits_{j\in N}\tilde M_j(A,\alpha \otimes x)$. The reason is that if $\bigcup\limits_{j\in N}\tilde M_j(A,\alpha \otimes x)$ is a minimal covering and $\bigcup\limits_{j\in N}\tilde M_j(A,\beta\otimes x)\subseteq\bigcup\limits_{j\in N}\tilde M_j(A,\alpha \otimes x)$ then $\bigcup\limits_{j\in N}\tilde M_j(A,\beta\otimes x)$ is a minimal covering as well. Notice that $\bigcup\limits_{j\in N}\tilde M_j(A,\beta\otimes x)$
is a covering because of $\beta\otimes x\in S(A,\beta\otimes x)$.

Claim: If $\bigoplus\limits_{i\in N}\undx_i\leq\alpha,\beta\leq\bigotimes\limits_{i\in N}\ovex_i$ then
 $\tilde x^*(A,\alpha\otimes x)\leq \tilde x^*(A,\beta\otimes x)$ and

$
\tilde x^*_j(A,\alpha\otimes x)=
\begin{cases}
\tilde x^*_j(A,\beta\otimes x),\text{  if  }\tilde x^*_j(A,\alpha\otimes x)=\ovex_j\\
\alpha\otimes \tilde x^*_j(A,\beta\otimes x), \text{  otherwise.  }
\end{cases}
$

Proof of claim.  Let $j\in N$ be a fixed index and by the definition of $\tilde x^*(A,\alpha\otimes x)$ we get
$$ x^*_j(A,\alpha\otimes x)=\min\{\alpha\otimes x_i;\ a_{ij}>\alpha\otimes  x_i \}\leq$$
 $$
\min\{\beta\otimes x_i;\ a_{ij}>\beta\otimes x_i \}= x^*_j(A,\beta\otimes x)
$$
 because of $a_{ij}>\beta\otimes x_i\geq  \alpha\otimes x_i.$\\

The equality
$\tilde x^*_j(A,\alpha\otimes x)=\ovex_j$ together with the inequality
$\tilde x^*(A,\alpha\otimes x)\leq \tilde x^*(A,\beta\otimes x)$ imply
$ \tilde x^*_j(A,\beta\otimes x)=\ovex_j=\tilde x^*_j(A,\alpha\otimes x).$\\

Suppose that $\tilde x^*_j(A,\alpha\otimes x)<\ovex_j.$ Then by the definition of $\tilde x^*(A,\alpha\otimes x)$ we get
$$\tilde x^*_j(A,\alpha\otimes x)=\min\{\alpha\otimes x_i;\ a_{ij}>\alpha\otimes x_i(=\alpha\otimes x_s) \}.$$
Consider two possibilities:

1. $\tilde x^*_j(A,\beta\otimes x)=\ovex_j.$ Then we obtain
$$\tilde x^*_j(A,\alpha\otimes x)=\alpha\otimes x_s<a_{sj}\leq \beta\otimes x_s.$$
Thus the inequality $\alpha\otimes x_s< \beta\otimes x_s$ implies
$\alpha<\beta$ and $\alpha<x_s$ and we get
$$\tilde x^*_j(A,\alpha\otimes x)=\alpha\otimes x_s=\alpha=\alpha\otimes\ovex_j=\alpha\otimes \tilde x^*_j(A,\beta\otimes x).$$

2. $\tilde x^*_j(A,\beta\otimes x)<\ovex_j.$ There is $r\in N$ such
that
$$\alpha\otimes x_s=\tilde x^*_j(A,\alpha\otimes x)\leq \tilde x^*_j(A,\beta\otimes x)=\beta\otimes x_r.$$ Notice that if
 $\alpha\otimes x_s<\beta\otimes x_r$ then $\alpha\leq x_s$
 (if $\alpha> x_s$ then $x_s=\alpha\otimes x_s=\beta\otimes x_s<\beta\otimes x_r$ and
 this is a contradiction with $x^*_j(A,\beta\otimes x)=\beta\otimes x_r$).
 Hence
$$\tilde x^*_j(A,\alpha\otimes x)=\alpha\otimes x_s=\alpha\otimes(\alpha\otimes x_s)=\alpha\otimes(\beta\otimes x_r)=\alpha\otimes \tilde x^*_j(A,\beta\otimes x).$$
Now we shall prove the inclusion
$\bigcup\limits_{j\in N}\tilde M_j(A,\beta\otimes x)\subseteq\bigcup\limits_{j\in N}\tilde M_j(A,\alpha \otimes x)$. \\

Let $k\in \tilde M_j(A,\beta\otimes x)$, i.e., $a_{kj}\otimes \tilde x^*_j(A,\beta\otimes x)=\beta\otimes x_k(\geq \alpha\otimes x_k).$
We shall consider two cases.\\

Case 1. $\tilde x^*_j(A,\alpha\otimes x)=\ovex_j$. In this case we
have that $a_{lj}\leq\alpha\otimes x_{\ell}$ for all $\ell$, and in
particular,
$$a_{kj}\leq \alpha\otimes x_k\Rightarrow
a_{kj}\otimes \tilde x^*_j(A,\alpha\otimes x)\leq\alpha\otimes
x_k.$$ For the opposite inequality observe that
$$a_{kj}\otimes \tilde x^*_j(A,\alpha\otimes x)=a_{kj}\otimes \tilde x^*_j(A,\beta\otimes x)=\beta\otimes x_k\geq \alpha\otimes x_k.$$

Case 2. $\tilde x^*_j(A,\alpha\otimes x)<\ovex_j$. This case follows from the fact that
$$a_{kj}\otimes \tilde x^*_j(A,\alpha\otimes x)=a_{kj}\otimes (\alpha\otimes \tilde x^*_j(A,\beta\otimes x))=\alpha\otimes(\beta\otimes x_k)= \alpha\otimes x_k.$$\end{proof}


\begin{thebibliography}{99}

\bibitem{ButSIS}
P.~Butkovi\v{c}. Simple image set of $(\max,+)$ linear mappings.
\newblock{\itshape Discrete Appl. Math.} vol. 105 (2000), 73 -- 86.

\bibitem{But:10}
P.~Butkovi\v{c}. Max-linear Systems: Theory and Applications.
Springer, 2010.

\bibitem{bss}
P. Butkovi\v c, H. Schneider and S. Sergeev: Recognising weakly stable matrices, SIAM J. Control Optim. 50(5) (2012) 3029--3051.

\bibitem{c2}
K. Cechl\'arov\'a:
\newblock Unique solvability of max-min fuzzy equations and strong regularity of matrices over fuzzy algebra.
\newblock {\itshape Fuzzy sets and systems} 75 (1995),
165-177.

\bibitem{c3}
K. Cechl\'arov\'a:
\newblock Efficient computation of the greatest
eigenvector in fuzzy algebra.
\newblock {\itshape Tatra Mt. Math. Publ.} 12 (1997),
73-79.


\bibitem{cg}
R. A.~Cuninghame-Green:
\newblock Minimax algebra and applications.
\newblock {\itshape Advances in Imaging and Electron Physics } vol. 90,
 (1995)   1--121.

\bibitem{DNG}
A.~Di Nola and B.~Gerla:
\newblock Algebras of {\L}ukasiewicz's logic and their semiring reducts.
\newblock In G. L. Litvinov and V. P. Maslov, editors, {\itshape Idempotent
  mathematics and mathematical physics}, pages 131--144, 2005.

\bibitem{DNR}
A.~Di Nola and C.~Russo:
\newblock {\L}ukasiewicz transform and its application to compression and
  reconstruction of digital images.
\newblock {\itshape Information Sci.}, 177:1481--1498, 2007.


\bibitem{gazi08}
M.~Gavalec, K.~Zimmermann:
Classification of solutions to systems of two-sided equations with
interval coefficients. Inter. J. of Pure and Applied Math. 45
(2008), 533--542.


\bibitem{g1}
M.~Gavalec: Periodicity in Extremal Algebra, Gaudeamus, Hradec Kr\'alov\'e 2004.


\bibitem{Gol}
J. S.~Golan. Semirings and Their Applications. Springer, 1999.

\bibitem{gm1}
 M. Gondran and M. Minoux: Graphs, dioids and semirings: new models and
 algorithms, Springer 2008.

\bibitem{HOW}
B. Heidergott, G.-J. Olsder and J. van der Woude: Max-plus at Work.
Princeton University Press, 2005.

\bibitem{KM:97}
V. N.~Kolokoltsov and V.P.~Maslov. Idempotent analysis and its applications.
Kluwer, Dordrecht, 1997.

\bibitem{klrk}
V. Kreinovich, A. Lakeyev, J. Rohn and R. Kahl: Computational complexity
and feasibility of data processing and interval computations. Dordrecht--Boston--London: Kluwer Academic Publishers, 1998.

\bibitem{LM:05}
G. L.~Litvinov and V.P.~Maslov (eds.): Idempotent Mathematics and Mathematical
Physics, vol. 377 of Contemporary Mathematics, AMS, 2005.

\bibitem{LS:09}
G. L.~Litvinov and S.N.~Sergeev (eds.): Tropical and Idempotent Mathematics,
vol. 495 of Contemporary Mathematics, AMS, 2009.


\bibitem{mmp}
M. Moln\'arov\'a, H. My\v skov\'a and
 J. Plavka: The  robustness of interval fuzzy matrices, Linear Algebra and Its Applications 438(8) (2013) 3350--3364.

\bibitem{mp}
H. My\v skov\'a and J. Plavka:
\newblock The  X-robustness of interval fuzzy matrices.
\newblock {\itshape Linear Algebra and Its Applications} 438 (6) (2013), 2757--2769.



\bibitem{m3}
H. My\v skov\'a: Max-$\min$ interval systems of linear equations with bounded
solution,  Kybernetika 48(2) (2012), 299-308.

\bibitem{ms}
H. My\v skov\'a, L. \v Stefansk\'y:
\newblock Robustness of fuzzy interval circulant-Hankel matrices.
\newblock {\itshape Linear Algebra and Its Applications} 444 (2014) 165--182.

\bibitem{PS}
J. Plavka, P. Szab\'o: On the $\lambda$-robustness of matrices
over fuzzy algebra, Discrete Applied Mathematics 159(5) (2011), 381--388.



\bibitem{P1}
J. Plavka: On the weak robustness of fuzzy matrices, Kybernetika
49(1) (2013), 128--140.

\bibitem{ro} J.~Rohn:
\newblock Systems of Linear Interval Equations.
\newblock {\itshape Linear Algebra and Its Applications} 126 (1989)  39--78.

\bibitem{s}
E. Sanchez: Resolution of eigen fuzzy sets equations, Fuzzy Sets
and Systems 1 (1978), 69-74.

\bibitem{Ser-11}
S. Sergeev: Max-algebraic attraction cones of nonnegative irreducible matrices.
Linear Algebra Appl. 435(7) (2011) 1736-1757.

\bibitem{y}
Yi-Jia Tan, Eigenvalues and eigenvectors for matrices over
distributive lattices, Linear Algebra Appl. 283 (1998), 257-272.


\bibitem{z}
K. Zimmernann: Extrem\'aln\' i algebra (in  Czech), Ekon.
\'ustav \v CSAV Praha, 1976.


\end{thebibliography}
\end{document}